\renewcommand{\O}{\Omega}
\newcommand{\po}{\partial\O}
\renewcommand{\a}{\alpha}
\renewcommand{\div}{\,\mathrm{div}\,}
\newcommand{\s}{\sigma}
\newtheorem{theorem}{Theorem}[section]
\newtheorem{thm}{Theorem}[section]
\newtheorem{prop}[thm]{Proposition}
\newtheorem{cor}[thm]{Corollary}
\newtheorem{lemma}[thm]{Lemma}
\newtheorem{defn}[thm]{Definition}
\newtheorem{preremark}[thm]{Remark}
\newenvironment{remark}{\begin{preremark}\rm}{\medskip \end{preremark}}
\numberwithin{equation}{section}
\newcommand\res{\hbox{ {\vrule height .22cm}{\leaders\hrule\hskip.2cm} } }
\newcommand{\R}{\mathbb R}
\def\XXint#1#2#3{{\setbox0=\hbox{$#1{#2#3}{\int}$}
       \vcenter{\hbox{$#2#3$}}\kern-.5\wd0}}
\newcommand{\meanbar}[1]{%
\setbox0 = \hbox{$#1 \int$}
\hbox to 0pt{%
\thinspace
\hskip 0.1\wd0
\raise 0.5\ht0
\hbox{%
\lower 0.5\dp0
\hbox{\rule{0.8\wd0}{2\linethickness}}
}%
\hss
}%
}
    \newcounter{myfootertablecounter}
\begin{document}
\title{Perturbations of elliptic operators in chord arc domains.}

\author{Emmanouil Milakis, Jill Pipher, Tatiana Toro}
\date{}
\maketitle

\begin{abstract}

We study the boundary regularity of solutions to divergence form operators which are small perturbations of operators for which the boundary regularity of solutions is known.
An operator is a small perturbation of another operator if the deviation function of the coefficients satisfies a Carleson measure condition with small norm. 
We extend Escauriaza's result on Lipschitz domains to chord arc domains with small constant. In particular we prove that if $L_1$ is a small perturbation of $L_0$ and $\log k_0$ 
has small BMO norm so does $\log k_1$. Here $k_i$ denotes the density of the elliptic measure of $L_i$ with respect to the surface measure of the boundary of the domain.

AMS Subject Classifications: 35J25, (31B05) \\ \textbf{Keywords}:
Chord arc domain, Elliptic measure, VMO.

\end{abstract}

\section{Introduction}
\renewcommand{\thesection}{\arabic{section}}

In this paper we study the regularity properties of the elliptic measure associated to an elliptic operator in divergence form, $L=\text{div}A\nabla$ 
on chord arc domains (CADs). We assume that $A$ is a small perturbation of the matrix associate to a \emph{regular} operator. See discussion below for the definition of 
small perturbation and the notion of \emph{regular} operator. Chord arc domains are not necessary Lipschitz domains, in general they cannot be 
locally represented as graphs. This lack of a \emph{preferred} direction even at the local level introduces a new set of challenges.
On the other hand their geometry is sufficiently under control in order to 
develop and use tools from harmonic analysis.  Chord arc domains in $\R^n$ are non-tangentially accessible (NTA) domains whose boundaries are Ahlfors regular (a "non-
degeneracy" condition indicating that the surface measure of $(n-1)$-dimensional balls with center on the boundary and radius $r$ should behave like $r^{n-1}$). 
CADs are sets of locally finite perimeter (see \cite{eg}).
In \cite{kt1}, Kenig and 
Toro showed that if $\Omega$ is a $(\delta,R)-$ chord arc domain with small $\delta$ (see Definition \ref{CADdr} below), then the unit 
normal to $\partial\Omega$ has small BMO constant with respect to $\sigma=\mathcal{H}^{n-1}\res\partial\Omega$ the surface measure to $\partial\Omega$.

For the Laplace operator, $L=\Delta$, Dahlberg \cite{d1} proved that if $\Omega$ is a strongly Lipschitz domain then the harmonic measure and the surface measure are mutually 
absolutely continuous and the Poisson kernel is in $L^2(\sigma)$. In \cite{jk82}, Jerison and Kenig showed that if $\Omega$ is a $C^1$ domain then $\log k$ (the logarithm of the 
Poisson kernel) belongs to $\text{VMO}(\sigma)$ where VMO is the Sarason space of vanishing mean oscillation. In \cite{kt1}, Kenig and Toro extended this result to a non-smooth 
setting by proving that if $\Omega$ is a chord arc domain with vanishing constant (see Definition \ref{vanishing} below) then $\log k$ belongs to $\text{VMO}(\sigma)$. 

Questions concerning the regularity of the elliptic measure for variable coefficients operators in divergence form are rather delicate as was shown by the work of \cite{cfk} and
\cite{mm} where examples of operators with singular elliptic measures with respect to surface measure on smooth domains were constructed.
Regularity results have been obtained, on Lipschitz domains, provided that the coefficient matrix $A$ is given as a perturbation of a 
given matrix $A_0$ that corresponds to an elliptic operator whose elliptic measure is \emph{regular} with respect to the surface measure to the boundary. 
In \cite{d4}, Dahlberg introduced the notion of perturbation of elliptic operators in Lipschitz 
domains. Roughly speaking an operator $L=\div A\nabla$  is a perturbation of an operator $L_0=\div A_0\nabla$,
if the difference between the coefficient matrices $A$ and $A_0$ satisfies a Carleson condition.

More precisely, let $\Omega\subset\R^{n}$ be a CAD (see Definition \ref{CAD}) and consider two elliptic operators $L_i={\rm{div}}(A_i\nabla\ )$ for $i=0,1$ in $\Omega$. We say that $L_1$ is a perturbation of $L_0$ if the deviation function
\begin{equation}\label{eqn:tt-a}
a(X)=\sup\{|A_1(Y)-A_0(Y)|: Y\in B(X,\delta(X)/2)\}
\end{equation}
where
$\delta(X)$ is the distance of $X$ to $\partial \Omega$, satisfies the following Carleson measure property: there exists a constant $C>0$ such that
\begin{equation}\label{normfkp}
\sup_{0<r<\rm{diam}\Omega}\sup_{Q\in\partial \Omega}
\bigg\{\frac{1}{\sigma(B(Q,r))}\int_{B(Q,r)\cap \Omega}\frac{a^2(X)}{\delta(X)}dX\bigg\}^{1/2}\le
 C.
\end{equation}

Note that in this case $L_1=L_0$ on $\partial \Omega$. $L_1$ is said to be a perturbation of $L_0$ with vanishing Carleson constant if for each compact $K\subset \mathbb{R}^n$
\begin{equation}\label{carlesonVanish}
\lim_{r\rightarrow 0}\gamma_K(r)=0
\end{equation}
where
\begin{equation}\label{carleson1}
\gamma_K(r)=\sup_{Q\in \partial \Omega \cap K}\sup_{0<s\leq r}\bigg(\frac{1}{\sigma(B(Q,s))}\int_{B(Q,s)\cap \Omega}\frac{a^2(X)}{\delta(X)}dX\bigg)^{1/2}.
\end{equation}

For $i=0,1$ we denote by $G_i(X,Y)$  the Green's function of $L_i$ in $\Omega$ with pole at $X$ and by $\omega^X_i$ the corresponding elliptic measure. Since the results below 
are independent of the pole $X$ to simplify notation we denote by $\omega_i$ the elliptic measure of $L_i$. Recall that $k_i$ is the Radon-Nikodym derivative of $\omega_i$ with respect to $\sigma$.

\begin{theorem}[Dahlberg \cite{d4}]
Let $\Omega\subset\R^n$ be a Lipschitz domain. Assume that $L_1$ is a perturbation of $L_0$ with vanishing Carleson constant then $\omega_0\in B_p(\sigma)$ for some $p\in (1,\infty)$ if and only if $\omega_1\in B_p(\sigma)$.
\end{theorem}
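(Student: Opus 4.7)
The plan is to prove the nontrivial direction $\omega_0 \in B_p(\sigma) \Rightarrow \omega_1 \in B_p(\sigma)$; the reverse follows by running the same argument with $L_0$ and $L_1$ interchanged, which is legitimate because the deviation function $a$ in \eqref{eqn:tt-a} and therefore the Carleson condition \eqref{normfkp} are symmetric in $A_0$ and $A_1$. By the well-known equivalence between $B_p(\sigma)$ and $L^{p'}$-solvability of the Dirichlet problem (with $p'=p/(p-1)$), the goal reduces to transferring $L^{p'}$-solvability from $L_0$ to $L_1$ via a Carleson-measure absorption argument.

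Let $f \in C(\partial\Omega)$ and let $u_i$ solve $L_i u_i = 0$ in $\Omega$ with $u_i|_{\partial\Omega} = f$. Since $u_1 - u_0$ vanishes on $\partial\Omega$ and $L_0(u_1 - u_0) = \operatorname{div}((A_0 - A_1) \nabla u_1)$, Green's identity gives
\begin{equation*}
u_1(X) - u_0(X) = \int_\Omega \nabla_Y G_0(X, Y) \cdot (A_0 - A_1)(Y) \, \nabla u_1(Y) \, dY.
\end{equation*}
Using $|A_0 - A_1|(Y) \le a(Y)$ (after passage to a Whitney neighbor) and Cauchy--Schwarz, this yields the pointwise bound
\begin{equation*}
|u_1(X) - u_0(X)|^2 \le C \Bigl( \int_\Omega \frac{G_0(X, Y)}{\delta(Y)} a^2(Y) \, dY \Bigr) \Bigl( \int_\Omega G_0(X, Y) |\nabla u_1(Y)|^2 \delta(Y) \, dY \Bigr).
\end{equation*}
For $X \in \Gamma(Q)$ in a non-tangential cone, the second factor controls $S(u_1)(Q)^2$; the first, using that $G_0(X, \cdot)/\delta(\cdot)$ weighted by $\sigma$ behaves like the kernel of $\omega_0^X$ together with the Carleson hypothesis \eqref{normfkp}, is bounded by the localized Carleson norm $\gamma_K(r)^2$, which becomes arbitrarily small at small scales by the vanishing assumption \eqref{carlesonVanish}.

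Taking non-tangential suprema in $X$, integrating in $L^{p'}(\sigma)$, and applying $L^{p'}$-solvability of $L_0$ together with the $N$-$S$ equivalence for $L_1$, the plan is to arrive at a schematic inequality
\begin{equation*}
\|N(u_1)\|_{L^{p'}(\sigma)} \le C \|f\|_{L^{p'}(\sigma)} + \eta_r \, \|N(u_1)\|_{L^{p'}(\sigma)}
\end{equation*}
with $\eta_r \to 0$ as $r \to 0$. Working at a scale for which $\eta_r$ is small enough to absorb the last term, then patching via a covering of $\partial\Omega$ by sawtooth subdomains, yields $L^{p'}$-solvability for $L_1$, hence $\omega_1 \in B_p(\sigma)$.

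The main obstacle is the absorption and patching step, which demands that all the estimates used (Green's function bounds, the $N$-$S$ equivalence, and the $L^{p'}$-bounds for $L_0$) survive uniformly under restriction to sawtooth subdomains of $\Omega$. This is where the Lipschitz geometry enters essentially, through doubling, non-tangential accessibility, and uniform boundary behavior of the Green's function across the covering. A preliminary qualitative step, needed even to make sense of $k_1$, is the absolute continuity $\omega_1 \ll \omega_0$, which one obtains from a localized $A_\infty$ comparison derived from the same Green's function representation.
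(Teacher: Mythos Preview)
The paper does not give its own proof of this theorem; it is quoted from Dahlberg \cite{d4} as background. What the paper \emph{does} say about the method (in the paragraph following the three quoted theorems) is that Dahlberg's argument runs through the one-parameter family $L_t=\operatorname{div}((1-t)A_0+tA_1)\nabla$ and establishes a differential inequality for a quantity $\Psi(t)$ controlling the $B_p(\sigma)$ constant of $\omega_t$; integrating from $t=0$ to $t=1$ transfers $B_p$ with the \emph{same} exponent. This is exactly the template the present paper adopts in Section~\ref{mainresult} (Lemmas~\ref{lemma3}--\ref{lemma5} and Theorem~\ref{main}) for its CAD analogue.

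Your plan is a genuinely different route---a direct Green-representation and absorption argument, closer in spirit to Fefferman--Kenig--Pipher than to Dahlberg's ODE method. As a strategy for recovering the same exponent $p$ it has a real gap. The step ``applying \ldots\ the $N$--$S$ equivalence for $L_1$'' is circular: Lemma~\ref{1510} requires $\sigma\in A_\infty(d\omega_1)$, which is (a weak form of) the conclusion you are after. You can break the circle by first invoking FKP to get $\omega_1\in A_\infty(\sigma)$, but FKP gives no control on the exponent, and your absorption inequality then needs all constants---the $N$--$S$ comparison for $L_1$, the Green-function bounds, and the sawtooth patching---to be uniform at the target exponent $p'$; your sketch acknowledges this as ``the main obstacle'' but does not resolve it. Also, the claim that the factor $\int_\Omega G_0(X,Y)\,|\nabla u_1(Y)|^2\,\delta(Y)\,dY$ ``controls $S(u_1)(Q)^2$'' for $X\in\Gamma(Q)$ is not correct as stated: that integral is weighted by $G_0(X,\cdot)\,\delta$, not by $\delta^{2-n}$ over a cone, and passing from one to the other already uses comparisons that presuppose information about $\omega_0$ or $\omega_1$. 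Dahlberg's interpolation-family approach sidesteps all of this: since the deviation of $L_t$ from $L_0$ is $t\,a(X)$, every $\omega_t$ is uniformly a small perturbation of $\omega_0$, and the differential inequality delivers the same $p$ directly without any bootstrap.
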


\begin{theorem}[Fefferman-Kenig-Pipher \cite{fkp}]
Let $\Omega\subset\R^n$ be a Lipschitz domain. Assume that $L_1$ is a perturbation of $L_0$ then $\omega_0\in A_\infty(\sigma)$ if and only if $\omega_1\in A_\infty(\sigma)$.
\end{theorem}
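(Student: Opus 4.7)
The plan is to reduce the $A_\infty(\s)$-equivalence to a comparison, valid on each Carleson region, between the $L_0$- and $L_1$-harmonic extensions of the same boundary data, and to exploit the Carleson hypothesis (\ref{normfkp}) on the deviation $a$ by means of a stopping-time decomposition. By symmetry it suffices to show $\omega_0\in A_\infty(\s)\Rightarrow \omega_1 \in A_\infty(\s)$. The starting point is the characterization of $A_\infty(\s)$ in terms of $L^p(\s)$-solvability of the Dirichlet problem for some $p<\infty$, or equivalently the square-function / nontangential-maximal-function equivalence $\|S(u_i)\|_{L^p(\s)}\sim \|N(u_i)\|_{L^p(\s)}$ for appropriate boundary data, so that the whole argument reduces to transferring such an estimate from $L_0$ to $L_1$.

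Next I would compare $u_0$ and $u_1$ via a Green identity representation. Integrating against the $L_0$-Green function and using $L_1 u_1=0$, one obtains
\begin{equation*}
u_1(X)-u_0(X)=\int_\O \grad_Y G_0(X,Y)\cdot (A_0-A_1)(Y)\, \grad u_1(Y)\, dY,
\end{equation*}
so that the discrepancy is controlled entirely by $A_0-A_1$, pointwise dominated by $a$, paired with $\grad u_1$ through the $L_0$-Green function. Combining the interior Caccioppoli estimate, the bound $|\grad_Y G_0(X,Y)|\lesssim G_0(X,Y)/\d(Y)$ off the pole, and the standard comparison $G_0(X,Y)\sim \omega_0^X(\Delta_Y)\,\d(Y)^{2-n}$ for an associated surface ball $\Delta_Y$, the right-hand side is recast as a bilinear pairing of the Carleson measure $a^2(Y)\, dY/\d(Y)$ against a square-function-type object built from $u_1$. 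A Carleson embedding then yields a bound of the form $\|N(u_1-u_0)\|_{L^p(\omega_0)}\leq C\,\|a^2/\d\|_{\mathrm{Carl}}\,\|N(u_1)\|_{L^p(\omega_0)}$, which is the mechanism that transfers $L^p(\omega_0)$-solvability of $L_0$ to $L^p(\omega_1)$-solvability of $L_1$.

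The main obstacle is that the Carleson norm in (\ref{normfkp}) is \emph{not} assumed small, so the previous estimate cannot be absorbed directly. The standard remedy, introduced in \cite{fkp}, is a stopping-time decomposition: for each surface ball $\Delta$ one selects a maximal family of descendants on which the local Carleson mass of $a^2/\d$ first exceeds a fixed small threshold $\eta$, and one works on the resulting sawtooth subdomain where, by construction, the truncated Carleson norm is at most $\eta$. On the sawtooth the absorption argument closes, producing a good-$\l$ inequality comparing the distributions of $N(u_1-u_0)$ and $N(u_1)$ modulo an error supported on the union of stopping balls. The packing properties of the stopping family, which themselves come from Carleson's lemma applied to (\ref{normfkp}), control this error, and iterating across generations of stopping times produces the desired scale-invariant $A_\infty$-comparability of $\omega_0$ and $\omega_1$. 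This stopping-time / good-$\l$ step is by far the most delicate part of the argument; the remaining ingredients are standard Green-function estimates on Lipschitz domains and integration by parts.
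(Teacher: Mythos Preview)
The paper does not prove this theorem; it is quoted from \cite{fkp} as a background result, without proof or even a sketch, to place the present work in the context of the existing perturbation literature. There is therefore no proof in the paper to compare your proposal against.

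For what it is worth, your outline is a faithful high-level summary of the strategy in \cite{fkp}: the Green-function representation formula for $u_1-u_0$, the pointwise bound $|\nabla_Y G_0|\lesssim G_0/\delta$ away from the pole, the Carleson embedding, and --- crucially --- the stopping-time decomposition into sawtooth regions on which the local Carleson mass of $a^2/\delta$ is below a small threshold, followed by a good-$\lambda$ argument, are exactly the ingredients of the original proof. One minor refinement: what \cite{fkp} actually establishes is $\omega_1\in A_\infty(\omega_0)$, and the conclusion $\omega_1\in A_\infty(\sigma)$ then follows by transitivity of $A_\infty$ once $\omega_0\in A_\infty(\sigma)$ is assumed; your sketch runs these two steps together. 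Also, the symmetry you invoke at the outset is genuine (the Carleson condition on $a$ is symmetric in $A_0,A_1$), so the reduction to one direction is legitimate.
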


\begin{theorem}[Escauriaza \cite{esc1}]
Let $\Omega\subset\R^n$ be a Lipschitz domain. Assume that $L_1$ is a perturbation of $L_0$ with vanishing Carleson constant then $\log k_0\in \text{VMO}(\sigma)$ if and only if
$\log k_1\in \text{VMO}(\sigma)$.
\end{theorem}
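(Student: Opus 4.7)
The plan is to localize and quantify the Fefferman--Kenig--Pipher (FKP) perturbation argument and combine it with a characterization of $\log k \in \text{VMO}(\sigma)$ in terms of reverse H\"older inequalities whose constants tend to one as the radius shrinks. Using John--Nirenberg together with the $B_p$ self-improvement that follows from $A_\infty$, one should first verify that $\log k_i \in \text{VMO}(\sigma)$ is equivalent to the following \emph{vanishing reverse-H\"older property}: for some $p>1$ (equivalently, for every $p>1$) and every $\epsilon>0$ there exists $r_0>0$ so that for every surface ball $\Delta = B(Q,r)\cap\partial\Omega$ with $Q\in\partial\Omega$ and $r<r_0$,
\[
\Bigl(\dashint_\Delta k_i^p \, d\sigma\Bigr)^{1/p} \leq (1+\epsilon)\dashint_\Delta k_i\, d\sigma.
\]
Since FKP already supplies $\omega_1\in A_\infty(\sigma)$, hence some reverse H\"older inequality for $k_1$, the task reduces to improving the constants to $1$ at small scales.

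The core step is a \emph{quantitative} local version of FKP. I would revisit the FKP stopping-time/integration-by-parts scheme, which controls $\omega_1(E)/\omega_1(\Delta)$ in terms of $\omega_0(E)/\omega_0(\Delta)$ via the Carleson norm of $a^2/\delta$ over the Carleson region $T(\Delta)$ above $\Delta$. Tracking constants, I would aim for a statement of the form: for every $\eta>0$ and every $p>1$ there exist $\gamma_0>0$, $p'\in(1,p)$, and $\theta\in(0,1)$, depending on ellipticity and the Lipschitz character, such that whenever the Carleson norm of $a^2/\delta$ over $T(\Delta)$ is at most $\gamma_0$ and $\omega_0$ satisfies $B_p$ on $\Delta$ with constant $1+\eta$, then $\omega_1$ satisfies $B_{p'}$ on $\Delta$ with constant $1+C(\eta+\gamma_0^\theta)$.

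Given this quantitative localization the conclusion follows easily. Fix $\epsilon>0$ and a compact $K\subset\R^n$, and choose $\eta,\gamma_0$ with $C(\eta+\gamma_0^\theta)<\epsilon$. The vanishing Carleson hypothesis (\ref{carlesonVanish}) produces $r_1>0$ so that the Carleson norm over $T(\Delta)$ is at most $\gamma_0$ for every $\Delta$ with $Q\in K$ and $r<r_1$, and $\log k_0\in\text{VMO}(\sigma)$, through the equivalence above, supplies $r_2>0$ so that $\omega_0$ has $B_p$ constant at most $1+\eta$ on every $\Delta$ with $r<r_2$. Setting $r_0=\min(r_1,r_2)$ yields the vanishing reverse-H\"older property for $k_1$ on balls centered in $K$, hence $\log k_1\in\text{VMO}(\sigma)$. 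The converse is identical since the perturbation hypothesis is symmetric in $L_0$ and $L_1$.

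The principal obstacle is the quantitative local FKP step. The original argument is intrinsically qualitative: several stages use compactness and absorption that are harmless for an $A_\infty$ conclusion but degrade the constants as $\eta,\gamma_0\to 0$ unless reorganized. One must confine the stopping-time construction and associated sawtooth regions to scales comparable to $r$ so that only the local Carleson norm over $T(\Delta)$ enters the estimate, and close the argument via a small-constant version of Gehring's lemma (so that a $B_q$ reverse H\"older inequality with constant $1+\eta$ self-improves to $B_{q'}$ with constant $1+O(\eta)$ for some $q'>q$). A subsidiary issue is aligning the geometric region $T(\Delta)$ with the region appearing in (\ref{carleson1}), which is a routine enlargement that preserves the vanishing.
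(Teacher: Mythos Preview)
The paper does not itself prove this statement---it is cited as Escauriaza's result---but Section~3 reproves it in the broader CAD setting and, as the introduction says, follows ``Escauriaza's road map.'' That road map is fundamentally different from your plan. Escauriaza does \emph{not} try to make the FKP stopping-time machinery quantitative. Instead he introduces the one-parameter family $L_t=\mathrm{div}\bigl((1-t)A_0+tA_1\bigr)\nabla$, defines for each surface ball $\Delta_r$ and each test function $f$ the quantity
\[
\Psi(t)=\frac{1}{\omega_t(\Delta_r)}\int_{\Delta_r} f\,k_t\,d\sigma,
\]
shows $\Psi$ is Lipschitz in $t$, computes $\dot\Psi(t)$ via an integral representation involving $\varepsilon=A_1-A_0$, and proves the differential inequality $|\dot\Psi(t)|\le C\bigl(r^\gamma+\gamma(r^\beta)\bigr)$, where $\gamma(\cdot)$ is the local Carleson norm. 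The fundamental theorem of calculus and duality then give the additive transfer
\[
\frac{\bigl(\fint_{\Delta_r}k_1^2\,d\sigma\bigr)^{1/2}}{\fint_{\Delta_r}k_1\,d\sigma}
\;\le\;
\frac{\bigl(\fint_{\Delta_r}k_0^2\,d\sigma\bigr)^{1/2}}{\fint_{\Delta_r}k_0\,d\sigma}
\;+\;C\bigl(r^\gamma+\gamma(r^\beta)\bigr),
\]
which is exactly the vanishing reverse-H\"older comparison you want.

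Your proposal instead tries to squeeze constants tending to $1$ out of the FKP sawtooth/good-$\lambda$ argument. You yourself identify the obstacle: that argument is designed to output a qualitative $A_\infty$ conclusion, and its constants arise from iterated absorption steps that do not visibly collapse to $1$ as the Carleson norm vanishes. Nothing in your sketch resolves this; ``reorganize so the constants behave'' and ``small-constant Gehring'' are aspirations, not mechanisms. The reason the Dahlberg--Escauriaza interpolation method succeeds where yours stalls is structural: $\dot\Psi(t)$ is \emph{linear} in $\varepsilon$, so smallness of the Carleson norm feeds directly and additively into the estimate. That linearity is precisely the idea your FKP-based plan is missing, and it is why the family $L_t$ was introduced in the first place. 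If you want a viable proof, replace the quantitative-FKP step with the differential inequality for $\Psi(t)$.
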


Dahlberg's proof is based on a very original idea. He shows that a differential inequality holds for a quantity that controls the $B_p(\sigma)$ norms of the elliptic measures 
a one parameter family of operators that interpolate between $L_0$ and $L_1$. Escauriaza builds on this idea.  On the other hand  Fefferman, Kenig and Pipher use a completely 
different approach based on harmonic analysis techniques.
 
In this paper we extend Escauriaza's result to the CAD setting. The present work is a natural  
continuation of \cite{mt1} and \cite{MPT}. Although we follow Escauriaza's road map, the justification
of most steps depend on arguments that resemble those of \cite{MPT} which required developing harmonic analysis
techniques on CADs. The recurrent theme is that
since CAD are not locally representable as the graph of a good function we need to appeal to their 
geometry and the Ahlfors regality property of their boundary. In \S 2 we summarize some of the results from \cite{MPT}
and combine them with classical results from the theory of weights. In particular Corollary \ref{surface} guarantees that 
we can proceed as in Escauriaza's (see Remark \ref{esch}). In \S 3 we prove the main result which reduces to a differential 
inequality which yields as in Dahlberg's and Escauriaza's case a bound for the appropriate $B_p(\sigma)$ norm.

\section{Preliminaries}\label{prelims}
\renewcommand{\thesection}{\arabic{section}}

Let $\Omega\subset \R^{n}$ be a bounded
domain and consider elliptic
operators $L$ of the form
$
Lu=\textrm{div}(A(X)\nabla u)
$
defined in $\Omega$ where $A(X)=(a_{ij}(X))$ is a symmetric matrix such that there
are $\lambda, \Lambda>0$ satisfying
\begin{equation}\label{ellipticity}
\lambda |\xi|^2\le
 \sum_{i,j=1}^{n}a_{ij}(X)\xi_i\xi_j\le
 \Lambda |\xi|^2 \qquad \hbox{ for all } X\in\Omega\hbox{ and }\xi \in \R^{n}.
\end{equation}

We say that a function $u$ in $\Omega$ is a solution to $Lu=0$ in $\Omega$ provided that $u\in W_{\rm{loc}}^{1,2}(\Omega)$ and for all $\phi\in C^{\infty}_c(\Omega)$,
$\int_{\Omega}\langle A(x)\nabla u,\nabla\phi\rangle dx =0.$
A domain $\Omega$ is called regular for the operator $L$, if for
every $g\in C(\partial \Omega)$, the generalized solution of the
classical Dirichlet problem  with boundary data $g$ is a function $u\in C(\overline{\Omega})$. Let $\Omega$ be a regular domain for $L$, the Riesz Representation Theorem ensures that there exists a family of regular Borel probability measures
$\{\omega^X_L\}_{X\in\Omega}$ such that the function
$u(X)=\int_{\partial\Omega}g(Q)d\omega^X_L(Q)$ satisfies the Dirichlet problem for $L$ with boundary data $g$.
For $X\in \Omega$, $\omega^X_L$ is called the $L-$elliptic measure of $\Omega$ with pole $X$. When no
confusion arises, we will omit the reference to $L$ and simply
called it as the elliptic measure. 

The following lemmas contain some properties on the boundary behavior of $L-$elliptic solutions in non-tangentially accessible (NTA) domains for uniformly elliptic divergence form operators $L$ with bounded measurable coefficients. We refer the reader to \cite{jk}, \cite{k1} for the definitions and more details regarding elliptic operators of divergence form
defined in NTA domains.

\begin{lemma}[Cacciopoli Inequality]\label{Cacciop}
Let $u$ be a non-negative subsolution in $\Omega$ and $\overline{B(X,2R)}\subset \Omega$. Then
$${\fint}_{B(X,R)}|\nabla u(X)|^2dX\leq \frac{C}{R^2}{\fint}_{B(X,2R)}u^2(X)dX$$
where constant $C$ depends on the ellipticity constants $\lambda,\Lambda$ and the dimension $n$.
\end{lemma}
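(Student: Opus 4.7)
The plan is to prove this via the standard energy (Caccioppoli) argument: we test the subsolution inequality against $u$ multiplied by the square of a smooth spatial cutoff. Fix $X_0$ and $R$ with $\overline{B(X_0,2R)}\subset\O$, and choose $\eta\in C_c^\infty(B(X_0,2R))$ such that $\eta\equiv 1$ on $B(X_0,R)$, $0\le\eta\le 1$, and $|\nabla\eta|\le C/R$. Since $u\ge 0$ and $u\in W^{1,2}_{\rm{loc}}(\O)$, the function $\phi=u\eta^2$ is a non-negative $W^{1,2}$ function compactly supported in $\O$, hence (after the usual density approximation) is admissible in the weak formulation of $Lu\ge 0$. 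This gives
\begin{equation*}
\int_\O \langle A\nabla u,\nabla(u\eta^2)\rangle\,dX\le 0.
\end{equation*}

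Next I would expand $\nabla(u\eta^2)=\eta^2\nabla u+2u\eta\,\nabla\eta$ and split the integral to obtain
\begin{equation*}
\int_\O \eta^2\langle A\nabla u,\nabla u\rangle\,dX\le -2\int_\O u\eta\,\langle A\nabla u,\nabla\eta\rangle\,dX.
\end{equation*}
The ellipticity assumption \eqref{ellipticity} bounds the left side below by $\l\int\eta^2|\nabla u|^2\,dX$, while the upper bound $|A|\le \L$ together with the Cauchy--Schwarz and Young inequalities gives
\begin{equation*}
2\left|\int_\O u\eta\,\langle A\nabla u,\nabla\eta\rangle\,dX\right|\le \frac{\l}{2}\int_\O \eta^2|\nabla u|^2\,dX+\frac{C(\l,\L)}{R^2}\int_{B(X_0,2R)} u^2\,dX.
\end{equation*}

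Absorbing the first right-hand term into the left side and restricting the domain of integration on the left to $B(X_0,R)$, where $\eta\equiv 1$, yields
\begin{equation*}
\int_{B(X_0,R)}|\nabla u|^2\,dX\le \frac{C}{R^2}\int_{B(X_0,2R)} u^2\,dX
\end{equation*}
with $C=C(\l,\L,n)$. Dividing both sides by $|B(X_0,R)|$ and noting that $|B(X_0,2R)|/|B(X_0,R)|=2^n$ is dimensional produces the stated averaged form. There is no substantive obstacle here: the only mild technical point is justifying $u\eta^2$ as a test function when $u$ is merely $W^{1,2}_{\rm{loc}}$, which is handled by approximating $u$ by smooth functions in $W^{1,2}(B(X_0,2R))$ and passing to the limit, using that the product $u\eta^2$ converges in $W^{1,2}$ with compact support.
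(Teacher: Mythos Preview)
Your proof is correct and is the standard energy argument for the Caccioppoli inequality. The paper does not give its own proof of this lemma; it is stated in the preliminaries as a known fact with a reference to \cite{jk}, \cite{k1}, where exactly the argument you wrote appears.
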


\begin{lemma}[Boundary Cacciopoli Inequality]\label{bdCacciop}
Let $\Omega$ be an NTA domain and $Q\in\partial\Omega$. If $u$ satisfies $Lu=0$ in $T(Q,4R)=B(Q,4R)\cap\Omega$ and $u=0$ on $\Delta(Q,4R)=B(Q,4R)\cap\partial\Omega$ then
$${\fint}_{T(Q,R)}|\nabla u(X)|^2dX\leq \frac{C}{R^2}{\fint}_{T(Q,2R)}u^2(X)dX$$
where constant $C$ depends on the ellipticity constants $\lambda,\Lambda$ and the dimension $n$.
\end{lemma}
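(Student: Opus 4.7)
The plan is to run the standard Caccioppoli argument with a cutoff, adapted to the boundary situation by exploiting the vanishing of $u$ on $\Delta(Q,4R)$ in a Sobolev sense so that $u\eta^2$ is a legitimate test function in $W_0^{1,2}(T(Q,2R))$. Fix a smooth cutoff $\eta\in C_c^\infty(B(Q,2R))$ with $\eta\equiv 1$ on $B(Q,R)$, $0\le\eta\le 1$ and $|\nabla\eta|\le C/R$. Because $u\in W^{1,2}(T(Q,4R))$ satisfies $Lu=0$ and has vanishing trace on $\Delta(Q,4R)$, the product $u\eta^2$ extended by zero lies in $W_0^{1,2}(\Omega)$ with support in $\overline{T(Q,2R)}$, so it is admissible in the weak formulation of $Lu=0$.

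Testing against $u\eta^2$ yields
\begin{equation*}
0=\int_{T(Q,2R)}\langle A\nabla u,\nabla u\rangle\,\eta^2\,dX+2\int_{T(Q,2R)}\langle A\nabla u,\nabla \eta\rangle\,u\,\eta\,dX.
\end{equation*}
By the ellipticity bound \eqref{ellipticity} the first term dominates $\lambda\int \eta^2|\nabla u|^2$, while the second is controlled, using $|A|\le\Lambda$ together with Cauchy--Schwarz and Young's inequality with a small parameter $\varepsilon$, by
\begin{equation*}
2\Lambda\int \eta|\nabla u|\,|\nabla\eta|\,|u|\,dX\le \varepsilon\int \eta^2|\nabla u|^2\,dX+\frac{\Lambda^2}{\varepsilon}\int|\nabla\eta|^2 u^2\,dX.
\end{equation*}
Choosing $\varepsilon=\lambda/2$ and absorbing the gradient term on the left, I obtain
\begin{equation*}
\int_{T(Q,R)}|\nabla u|^2\,dX\le \int \eta^2|\nabla u|^2\,dX\le \frac{C(\lambda,\Lambda)}{R^2}\int_{T(Q,2R)}u^2\,dX.
\end{equation*}

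Finally, to pass from the integral form to the averaged form in the statement I invoke the NTA structure: the corkscrew condition together with Ahlfors regularity of $\partial\Omega$ gives $|T(Q,R)|\simeq|T(Q,2R)|\simeq R^n$, so dividing both sides by $R^n$ and adjusting constants produces the claimed inequality with $C$ depending only on $\lambda,\Lambda,n$ and the NTA constants. I expect the one genuine point to check carefully is the admissibility of $u\eta^2$ as a test function: the vanishing of $u$ on $\Delta(Q,4R)$ has to be interpreted in the $W^{1,2}$ trace sense so that $u\eta^2$ extends by zero to an element of $W_0^{1,2}$ across the boundary portion of $\partial T(Q,2R)$; this is standard for NTA domains since they are regular for $L$ and solutions with continuous zero boundary data vanish in the Sobolev trace sense, but it is the only non-routine step in an otherwise classical argument.
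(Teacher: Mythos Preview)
The paper does not actually give a proof of this lemma: it is listed among the preliminary facts about solutions in NTA domains, with the reader referred to \cite{jk} and \cite{k1}. Your argument is the standard cutoff proof of Caccioppoli's inequality and is correct; the one substantive point---that $u\eta^2$ is an admissible test function because $u$ vanishes in the trace sense on $\Delta(Q,4R)$---is exactly the point you flag, and it is handled in the references the paper cites. Your observation that passing from the integral inequality to the averaged one uses $|T(Q,R)|\simeq|T(Q,2R)|\simeq R^n$, and hence picks up a dependence on the NTA (corkscrew) constants, is also correct; the paper's statement that $C$ depends only on $\lambda,\Lambda,n$ is a minor imprecision.
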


\begin{lemma}\label{lem2.3}
Let $\Omega$ be an NTA domain, $Q\in\partial\Omega$, $0<2r<R$, and
$X\in\Omega\backslash B(Q,2r)$. Then
$$
C^{-1}<\frac{\omega^{X}(B(Q,r))}{r^{n-2}G(A(Q,r),X)}<C,
$$
where $G(A(Q,r),X)$ is the $L-$Green function of $\Omega$ with pole $X$, $\omega^X$ is the corresponding elliptic measure and $A(Q,r)$ is a non-tangential point for $Q$ at $r$.
\end{lemma}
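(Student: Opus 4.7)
The statement is the classical Caffarelli--Fabes--Mortola--Salsa comparison between elliptic measure and Green's function, which in the NTA setting is due to Jerison--Kenig. I would prove the two inequalities by separate arguments relying on three standard tools available in NTA domains: the Harnack chain property, the existence of a corkscrew point $A(Q,r)$ with $\delta(A(Q,r))\approx r$, and the Carleson-type boundary estimate asserting that any non-negative $L$-solution $h$ in $T(Q,4r)$ that vanishes on $\Delta(Q,4r)$ satisfies $\sup_{T(Q,r)} h \le C\,h(A(Q,r))$. Both sides of the displayed estimate are controlled by these three inputs together with the pointwise Green's function bound $G(Y,Z)\le C|Y-Z|^{2-n}$.

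\textbf{Upper bound.} Fix $X\in\Omega\setminus B(Q,2r)$ and choose $\phi\in C^\infty_c(\R^n)$ with $\phi\equiv 1$ on $B(Q,r)$, $\supp\phi\subset B(Q,3r/2)$, and $|L\phi|\le C/r^2$. Since $\phi(X)=0$, Green's representation gives
$$\omega^X(B(Q,r))\ \le\ \int\phi\,d\omega^X\ =\ -\int_{\Omega}G(X,Y)\,L\phi(Y)\,dY\ \le\ \frac{C}{r^2}\int_{T(Q,3r/2)}G(X,Y)\,dY.$$
The integrand $Y\mapsto G(X,Y)$ is $L$-harmonic in $T(Q,2r)$ (as $X\notin B(Q,2r)$) and vanishes on $\Delta(Q,2r)$, so the Carleson estimate yields $G(X,Y)\le C\,G(X,A(Q,r))$ on $T(Q,3r/2)$. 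Since this region has Lebesgue measure $O(r^n)$, the bound $\omega^X(B(Q,r))\le C\,r^{n-2}\,G(X,A(Q,r))$ follows.

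\textbf{Lower bound.} Set $u(X):=\omega^X(B(Q,r))$ and $v(X):=c\,r^{n-2}\,G(X,A(Q,r))$, both $L$-harmonic on $\Omega\setminus\overline{B(Q,2r)}$ because $A(Q,r)\in B(Q,r)$. I would verify $v\le u$ on the boundary of this region and conclude by the maximum principle. On $\partial\Omega\setminus B(Q,2r)$ both vanish. On the interior arc $\partial B(Q,2r)\cap\Omega$, at points $Y$ with $\delta(Y)\gtrsim r$ a Harnack chain from $A(Q,2r)$ combined with the standard non-degeneracy $\omega^{A(Q,2r)}(B(Q,r)\cap\partial\Omega)\ge c_0$ gives $u(Y)\gtrsim 1$, while the interior pointwise Green's estimate gives $v(Y)\le Cc$, so the inequality holds when $c$ is small. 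For $Y$ close to $\partial\Omega$ the nearest boundary point lies outside $B(Q,r)$ for geometric reasons, so both $u$ and $G(\cdot,A(Q,r))$ vanish on a common boundary neighborhood of $Y$, and the boundary Harnack principle in NTA domains transfers the ratio inequality from a nearby non-tangential reference point. The maximum principle then propagates $v\le u$ throughout $\Omega\setminus\overline{B(Q,2r)}$, yielding the claim at $X$.

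\textbf{Main obstacle.} The delicate point is the arc $\partial B(Q,2r)\cap\Omega$ near the boundary: without a graph representation of $\partial\Omega$ one cannot use direct barriers and must invoke the full boundary Harnack principle for NTA domains. This is where the Harnack chain condition is essential; the Ahlfors regularity of the boundary, however, plays no role in this lemma, which accounts for its validity in arbitrary NTA domains with constants depending only on the NTA constants and the ellipticity constants of $L$.
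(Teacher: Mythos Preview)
The paper does not prove this lemma; it is quoted from Jerison--Kenig \cite{jk} and Kenig's book \cite{k1} as background, so there is no in-paper argument to compare against. Your outline follows the classical CFMS route, and the upper bound is essentially right, with one technical caveat: for merely bounded measurable $A$ the quantity $L\phi=\div(A\nabla\phi)$ is only a distribution, so the pointwise bound $|L\phi|\le C/r^2$ is not available. The representation should be read weakly as $\int\phi\,d\omega^X=\int_\Omega\langle A\nabla_Y G(X,Y),\nabla\phi(Y)\rangle\,dY$, after which $|\nabla\phi|\le C/r$ on an annulus of volume $\sim r^n$ together with Caccioppoli (Lemma~\ref{Cacciop}) for $G(X,\cdot)$ and the Carleson estimate give the same conclusion.

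The lower bound, however, has a genuine gap. At points of $\partial B(Q,2r)\cap\Omega$ near $\partial\Omega$ you invoke the boundary Harnack principle (the Comparison Principle, Lemma~\ref{lem2.5}). In the Jerison--Kenig development---and in the logical ordering this paper adopts---that comparison principle is \emph{proved using} the present lemma, so appealing to it here is circular. The standard repair avoids boundary Harnack entirely: run the maximum principle on $\Omega\setminus\overline{B(A,\rho)}$ with $A=A(Q,r)$ and $\rho=\delta(A)/2\sim r$, whose inner boundary $\partial B(A,\rho)$ lies wholly in the interior of $\Omega$. On $\partial\Omega$ one has $v=0\le u$; on $\partial B(A,\rho)$ the pointwise Green's upper bound gives $v\le Cc$, while interior Harnack together with the non-degeneracy $\omega^{A}(\Delta(Q,r))\ge c_0$ (the elementary Bourgain/exterior-corkscrew estimate, which precedes CFMS) gives $u\gtrsim 1$. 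Choosing $c$ small yields $v\le u$ on all of $\Omega\setminus\overline{B(A,\rho)}\supset\Omega\setminus B(Q,2r)$, and no boundary comparison is needed.
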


\begin{lemma}[Comparison Principle]\label{lem2.5}
Let $\Omega$ be an NTA domain and let $M>1$ be such that $0<Mr<R$. Suppose that $u,v$ vanish continuously on
$\partial\Omega\cap B(Q,Mr)$ for some $Q\in\partial\Omega$, $u,v\ge
 0$ and
$Lu=Lv=0$ in $\Omega\cap B(Q, Mr)$. Then for
all $X\in B(Q,r)\cap\Omega$,
$$
C^{-1}\frac{u(A(Q,r))}{v(A(Q,r))}\le
 \frac{u(X)}{v(X)}\le
C\frac{u(A(Q,r))}{v(A(Q,r))}
$$
where the
constant $C>1$ only depends on the dimension, the NTA constants and the ellipticity constants.
\end{lemma}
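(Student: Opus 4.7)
The plan is to prove this via the classical two-sided bound that identifies both $u(X)/u(A(Q,r))$ and $v(X)/v(A(Q,r))$ with a single normalized elliptic-measure quantity; dividing then yields the claim. This is the Caffarelli-Fabes-Mortola-Salsa argument, adapted to the NTA setting by Jerison and Kenig.

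The first step I would carry out is a Carleson-type non-tangential upper bound: if $u\ge 0$ is an $L$-solution in $T(Q, Mr)$ vanishing on $\Delta(Q, Mr)$, then
$$\sup_{X\in T(Q, r)} u(X) \le C\, u(A(Q, r)).$$
I would decompose $T(Q, r)$ via a Whitney covering. Points $X$ with $\delta(X)$ comparable to $r$ can be joined to $A(Q, r)$ by a bounded-length Harnack chain (guaranteed by the NTA condition), giving the bound directly. For $X$ close to $\partial\Omega$ one iterates over dyadic scales, using interior corkscrew points to reach non-tangential balls on which Harnack applies, together with Hölder decay at the boundary for non-negative $L$-solutions vanishing on a surface ball.

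The second step is a matching lower bound of the shape
$$u(X) \ge c \, u(A(Q, r))\, \frac{\omega^X(\Delta(Q, 2r))}{\omega^{A(Q, r)}(\Delta(Q, 2r))},\qquad X\in T(Q, r).$$
To prove this, compare $u$ with the $L$-harmonic majorant $u(A(Q, r))\,\omega^{\cdot}(\Delta(Q, 2r))/\omega^{A(Q, r)}(\Delta(Q, 2r))$ on $\partial T(Q, 2r)$: on $\Omega\cap\partial B(Q, 2r)$ invoke the upper Carleson bound at the larger scale, while on the boundary piece $\Delta(Q, 2r)$ the inequality is trivial since $u=0$ there. The maximum principle then propagates the inequality inside. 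Doubling of the elliptic measure and non-degeneracy at corkscrew points (ensuring $\omega^{A(Q, r)}(\Delta(Q, 2r))\ge c>0$) close this step.

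Combining the upper and lower bounds, and using Lemma \ref{lem2.3} to identify $\omega^X(\Delta(Q, r))$ with $r^{n-2} G(A(Q, r), X)$, both $u(X)/u(A(Q, r))$ and $v(X)/v(A(Q, r))$ are comparable to the same elliptic-measure ratio; taking quotients yields the stated bound. The main obstacle is the absence of a preferred direction: since $\partial\Omega$ need not be a graph, no explicit one-sided barrier can be built, and all transitions from interior corkscrews to the boundary must be carried out using only the NTA axioms (Harnack chain condition together with interior and exterior corkscrews) and the Hölder decay from the boundary Caccioppoli inequality. A secondary point is verifying that all constants depend only on $\lambda$, $\Lambda$, the NTA constants, and the dimension, and that $M$ can be fixed (large enough) so that the upper Carleson estimate on the sphere $\partial B(Q, 2r)$ and the lower bound above both operate within the region where $u$ is $L$-harmonic and vanishes on the boundary.
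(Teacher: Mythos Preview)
The paper does not prove this lemma; it is quoted as a known result for NTA domains, with references to \cite{jk} and \cite{k1}. So there is no in-paper argument to compare against, only the standard literature proof.

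Your plan follows the Caffarelli--Fabes--Mortola--Salsa/Jerison--Kenig route, and Step~1 (the Carleson estimate) is correct. Step~2, however, contains a genuine error. The displayed lower bound
\[
u(X) \ \ge\ c\, u(A(Q,r))\, \frac{\omega^X(\Delta(Q,2r))}{\omega^{A(Q,r)}(\Delta(Q,2r))}, \qquad X \in T(Q,r),
\]
is false. A basic NTA estimate gives $\omega^X(\Delta(Q,2r)) \ge c_0 > 0$ uniformly for $X \in T(Q,r)$ (the nearest boundary point to any such $X$ lies in $\Delta(Q,2r)$, and one uses the corkscrew lower bound at that scale), so your inequality would force $u(X) \ge c'\, u(A(Q,r))$ throughout $T(Q,r)$ --- impossible, since $u$ vanishes continuously on $\Delta(Q,r)$. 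Your own justification signals the mix-up: ``on $\Delta(Q,2r)$ the inequality is trivial since $u=0$ there'' is a valid step for an \emph{upper} bound $u \le C\cdot(\text{something positive})$, not for a lower one. What your maximum-principle comparison with that ``majorant'' actually proves is $u(X) \le C\, u(A)\,\omega^X(\Delta_{2r})/\omega^{A}(\Delta_{2r})$, which, since $\omega^X(\Delta_{2r}) \le 1$ and $\omega^{A}(\Delta_{2r}) \ge c_0$, is no stronger than Step~1. With only upper bounds for both $u$ and $v$, the final quotient step does not close.

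The difficulty you have hit is real: you cannot sandwich $u/u(A)$ against a single explicit barrier in one pass, because the natural barrier --- the Green's function $G(X_0,\cdot)$ --- is itself an admissible competitor $v$, so invoking a two-sided comparison with it presupposes exactly the result you are proving. The actual argument in \cite{jk} combines the Carleson estimate, the relation in Lemma~\ref{lem2.3}, and doubling of elliptic measure in an iteration over dyadic scales, propagating control of $u/v$ from an annulus $T(Q,2s)\setminus T(Q,s)$ into $T(Q,s)$ with a geometric gain, and then summing. I would suggest working through the sequence of lemmas in \cite{jk} (or the corresponding treatment in \cite{k1}) to see how the lower control is genuinely obtained.
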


An immediate consequence of the previous lemma is the following boundary regularity result.

\begin{lemma}[H\"{o}lder Regularity]\label{lem2.6}
Let $u,v$ be as in Lemma \ref{lem2.5}, then there exists $\vartheta\in (0,1)$ such that
$$
\bigg|\frac{u(Y)}{v(Y)}-\frac{u(X)}{v(X)}\bigg|\leq \frac{u(A_r(Q))}{v(A_r(Q))}\bigg(\frac{|X-Y|}{r}\bigg)^\vartheta
$$
for all $X,Y\in  B(Q,r)\cap\Omega$. Here $\vartheta$ depends on on the dimension, the NTA constants and the ellipticity constants.
\end{lemma}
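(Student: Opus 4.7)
I would prove this Hölder estimate by the classical oscillation-decay iteration, extracting from the comparison principle (Lemma~\ref{lem2.5}) a geometric decay for
$$\omega(s) := \osc_{B(Q,s)\cap\Omega}\frac{u}{v}, \qquad 0<s\le r.$$
The point is that the pointwise two-sided control of $u/v$ in Lemma~\ref{lem2.5} can be upgraded to a strict gain on the oscillation at each scale, and iterating this gain together with the upper bound $\omega(r)\le C\,u(A_r(Q))/v(A_r(Q))$ (which is immediate from Lemma~\ref{lem2.5}) yields the Hölder estimate.

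\textbf{Key step: oscillation decay.} Fix $s\le r$ and set $M_s=\sup_{B(Q,s)\cap\Omega}u/v$, $m_s=\inf_{B(Q,s)\cap\Omega}u/v$, so $\omega(s)=M_s-m_s$. The functions $M_s v-u$ and $u-m_s v$ are nonnegative $L$-solutions in $B(Q,s)\cap\Omega$ that vanish continuously on $\partial\Omega\cap B(Q,s)$. Applying Lemma~\ref{lem2.5} to the pairs $(M_s v-u,\,v)$ and $(u-m_s v,\,v)$ on the concentric ball of radius $s/M$, I obtain, for every $X\in B(Q,s/M)\cap\Omega$,
$$M_s-\frac{u(X)}{v(X)}\ge C^{-1}\!\left(M_s-\frac{u(A(Q,s/M))}{v(A(Q,s/M))}\right), \quad \frac{u(X)}{v(X)}-m_s\ge C^{-1}\!\left(\frac{u(A(Q,s/M))}{v(A(Q,s/M))}-m_s\right).$$
Taking $\sup$ and $\inf$ over such $X$ and adding the two inequalities produces
$$\omega(s/M)\le (1-C^{-1})\,\omega(s)=:\theta\,\omega(s),\qquad \theta\in(0,1).$$

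\textbf{Iteration and globalization.} Iterating gives $\omega(M^{-k}r)\le \theta^k\,\omega(r)$ for every $k\ge 0$, and a standard interpolation yields $\omega(s)\le C(s/r)^\vartheta\omega(r)$ for $0<s\le r$ with $\vartheta=\log(1/\theta)/\log M\in(0,1)$. Since $\omega(r)\le M_r\le C\,u(A_r(Q))/v(A_r(Q))$ by one more application of Lemma~\ref{lem2.5}, this proves the desired estimate whenever $X,Y$ both sit in a ball centered at a boundary point with radius comparable to $|X-Y|$. For arbitrary $X,Y\in B(Q,r)\cap\Omega$ I would dichotomize: if $\delta(X)\le 2|X-Y|$ (so $X$ is close to $\partial\Omega$), pick $P\in\partial\Omega$ with $|X-P|=\delta(X)$, observe $X,Y\in B(P,c|X-Y|)$, and apply the boundary oscillation estimate centered at $P$; if $\delta(X)>2|X-Y|$, then $Y\in B(X,\delta(X)/2)\subset\Omega$ and interior Hölder regularity (De~Giorgi--Nash--Moser) for $u$ and $v$, combined with the Harnack inequality bounding $v$ below on this ball, gives the Hölder estimate on the quotient, which chains with a boundary estimate at the foot of the perpendicular from $X$ to bring in the normalization $u(A_r(Q))/v(A_r(Q))$.

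\textbf{Main obstacle.} There is no substantial analytic difficulty; the argument is a mechanical packaging of Lemma~\ref{lem2.5}. The only care required is the bookkeeping in passing from the scale-by-scale oscillation bound around the fixed boundary point $Q$ to the pointwise Hölder estimate for an arbitrary pair $X,Y\in B(Q,r)\cap\Omega$, which is handled by the interior/boundary dichotomy above.
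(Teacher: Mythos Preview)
Your proposal is correct and follows exactly the classical oscillation-decay argument that the paper has in mind: the paper does not spell out a proof of Lemma~\ref{lem2.6} but simply records it as ``an immediate consequence of the previous lemma,'' referring the reader to \cite{jk} and \cite{k1}, where precisely the iteration you describe (applying the comparison principle to $M_s v-u$ and $u-m_s v$ to force geometric decay of the oscillation of $u/v$) is carried out. There is nothing to add.
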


\begin{defn}\label{CAD}
We say that  $\Omega \subset \R^{n}$ is a chord arc domain (CAD) if $\Omega$ is an NTA
domain whose boundary is Ahlfors regular, i.e. the surface measure to the boundary satisfies the following condition: there exists $C> 1$ so that for $r\in (0,{\rm{diam}}\,\Omega)$ 
and $Q\in\partial \Omega$
\begin{equation}\label{1.7A}
C^{-1}r^{n-1}\le
\sigma(B(Q,r))\le
 Cr^{n-1}.
\end{equation}
Here $B(Q,r)$ denotes the $n$-dimensional ball of radius
$r$ and center $Q$ and $\sigma=\mathcal{H}^{n-1}\res \partial\Omega$ and $\mathcal{H}^{n-1}$ denotes the $(n-1)$-dimensional Hausdorff measure. The best constant $C$ above is referred to as the Ahlfors regularity constant.
\end{defn}

As mentioned earlier CAD are sets of locally finite perimeter (see \cite{eg}).
Let $\Omega\subset \R^n$ be a domain. Let D denote Hausdorff distance between closed sets. We define
\begin{equation}
\theta(r) = \sup_{Q \in \partial\Omega}\inf_{\mathcal L}
r^{-1} D[\partial\Omega \cap B(Q,r), \mathcal L \cap B(Q,r)],
\end{equation}
where the infimum is taken over all $(n-1)$-planes containing $Q\in\partial\Omega$.

\begin{defn}\label{CADdr}
Let $\Omega \subset \R^{n}$ be a bounded domain, $\delta> 0$ and $R>0$. 
We say that
$\Omega$ is a $(\delta,R)$-chord arc domain (CAD) if $\Omega$ is a
set of locally finite perimeter such that
\begin{equation}\label{1.6}
\sup_{0<r<
 R}\theta(r)\le
 \delta
\end{equation}
and
\begin{equation}\label{1.7}
\sigma( B(Q,r))\le
 (1+\delta)\omega_{n-1}r^{n-1}\ \ \forall
Q\in\partial\Omega\ \ {\rm{and}} \ \forall r\in (0,R).
\end{equation}
Here $\omega_{n-1}$ is the volume of the $(n-1)$-dimensional unit
ball in $\R^{n-1}$.
\end{defn}

\begin{defn}\label{vanishing}
Let $\Omega \subset \R^{n}$, we say that $\Omega$ is a chord arc
domain with vanishing constant if it is a $(\delta, R)$-CAD for
some $\delta>0$ and $R>0$,
\begin{equation}\label{1.8}
\limsup_{r\rightarrow 0}\theta(r)=0
\end{equation}
and
\begin{equation}\label{1.9}
\lim_{r\rightarrow 0}\sup_{Q\in\partial \Omega}\frac{\sigma(B(Q,r))}{\omega_{n}r^{n-1}} =1.
\end{equation}
\end{defn}

%We now introduce the notion of perturbation of an operator. Consider two elliptic operators $L_i={\rm{div}}(A_i\nabla\ )$ for $i=0,1$ defined on a chord arc domain $\Omega\subset \R^{n}$. We say that $L_1$ is a perturbation of $L_0$ if the deviation function
%\begin{equation}\label{eqn:tt-a}
%a(X)=\sup\{|A_1(Y)-A_0(Y)|: Y\in B(X,\delta(X)/2)\}
%\end{equation}
%where
%$\delta(X)$ is the distance of $X$ to $\partial \Omega$, satisfies the following Carleson measure property: there exists a constant $C>0$ such that
%\begin{equation}\label{normfkp}
%\sup_{0<r<\rm{diam}\Omega}\sup_{Q\in\partial \Omega}
%\bigg\{\frac{1}{\sigma(B(Q,r))}\int_{B(Q,r)\cap \Omega}\frac{a^2(X)}{\delta(X)}dX\bigg\}^{1/2}\le
% C.
%\end{equation}

%Note that in this case $L_1=L_0$ on $\partial \Omega$. For $i=0,1$ we denote by $G_i(X,Y)$  the Green's function of $L_i$ in $\Omega$ with pole at $X$ and by $\omega^X_i$ the corresponding elliptic measure. We will say that $L_1$ is a perturbation of $L_0$ with vanishing Carleson constant if for each compact $K\subset \mathbb{R}^n$
%\begin{equation}\label{carlesonVanish}
%\lim_{r\rightarrow 0}\gamma_K(r)=0
%\end{equation}
%where
%\begin{equation}\label{carleson1}
%\gamma_K(r)=\sup_{Q\in \partial \Omega \cap K}\sup_{0<s\leq r}\bigg(\frac{1}{\sigma(B(Q,s))}\int_{B(Q,s)\cap \Omega}\frac{a^2(X)}{\delta(X)}dX\bigg)^{1/2}.
%\end{equation}

Next we recall some fine properties concerning perturbations of elliptic operators in CAD (see (\ref{normfkp}), (\ref{carlesonVanish}) (\ref{carleson1}) for the relevant definitions). In \cite{MPT}, we showed that we may assume $a(X)=0$ in $\Omega$ for $X\in\Omega$ with $\delta(X)>4R_0$ where $R_0=\frac{1}{2^{30}}\min\{\delta(0),1\}$ and  $0\in \Omega$. In particular we cover the boundary $\partial\Omega$ by balls $\{B(Q_i,R_0/2)\}_{i=1}^M$ such that $Q_i\in\partial\Omega$ and $|Q_i-Q_j|\ge
 \frac{R_0}{2}$ for $i\neq j$ and consider the partition of unity $\{\varphi_i\}_{i=1}^M$ associated with this covering that satisfies $0\le
\varphi_i\le
 1$, ${\rm{spt}}\varphi_i\subset B(Q_i,2R_0)$, $\varphi_i\equiv 1$ on $B(Q_i,R_0)$ and $|\nabla\varphi_i|\le
 4/R_0$. Then if we define
$$\psi_i(X)=\left\{
  \begin{array}{ll}
    \bigg(\sum_{j=1}^M\varphi_j(X)\bigg)^{-1}\varphi_i(X) & {\rm{if}} \  \sum_{j=1}^M\varphi_j(X)\neq 0\\
    0 & {\rm{otherwise}}.
  \end{array}
\right.$$
and 
 \begin{equation}\label{tt-a'}
 A'(X)=\bigg(\sum_{j=1}^{M}\psi_j(X)\bigg)A_1(X)+\bigg(1-\sum_{j=1}^{M}\psi_j(X)\bigg)A_0(X)
 \end{equation}
 the following lemmas hold.
\begin{lemma}[\cite{MPT}]\label{claim1-p4}
Let $A'$ be as in (\ref{tt-a'}) then for $X\in \Omega$, with $\delta(X)>4R_0$,
$$a'(X)=\sup_{B(X,\delta(X)/2)}|A'(Y)-A_0(Y)|=0.$$
\end{lemma}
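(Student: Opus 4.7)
The plan is to exploit the geometric fact that if $\delta(X) > 4R_0$, then the entire ball $B(X,\delta(X)/2)$ sits outside every support $\mathrm{spt}\,\varphi_j \subset B(Q_j, 2R_0)$, so the partition-of-unity weight $\sum_j \psi_j$ vanishes identically on that ball, forcing $A'(Y) = A_0(Y)$ there.

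First I would rewrite the deviation using the explicit definition \eqref{tt-a'} as
\begin{equation*}
A'(Y) - A_0(Y) = \Bigl(\sum_{j=1}^{M} \psi_j(Y)\Bigr)\bigl(A_1(Y) - A_0(Y)\bigr),
\end{equation*}
so it suffices to verify that $\sum_j \psi_j(Y) = 0$ for every $Y \in B(X, \delta(X)/2)$, which in turn reduces to showing $Y \notin B(Q_j, 2R_0)$ for every $j$ (since $\mathrm{spt}\,\varphi_j \subset B(Q_j, 2R_0)$ and $\psi_j$ is supported in the same set).

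The core step is a short triangle-inequality argument by contradiction. Suppose $Y \in B(X, \delta(X)/2)$ and also $Y \in B(Q_j, 2R_0)$ for some $j$. Then since $Q_j \in \partial\Omega$,
\begin{equation*}
\delta(X) \le |X - Q_j| \le |X-Y| + |Y - Q_j| < \tfrac{\delta(X)}{2} + 2R_0,
\end{equation*}
which yields $\delta(X) < 4R_0$, contradicting the hypothesis $\delta(X) > 4R_0$. Hence no such $Y$ meets any $B(Q_j, 2R_0)$, and therefore $\sum_j \psi_j(Y) = 0$ throughout $B(X, \delta(X)/2)$. Combined with the identity above, this gives $a'(X) = 0$.

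There is no real obstacle here; the only thing to double-check is the inclusion $\mathrm{spt}\,\psi_j \subset B(Q_j, 2R_0)$, which is immediate from the case-definition of $\psi_j$ (it vanishes wherever $\varphi_j$ does, and $\varphi_j$ is supported in $B(Q_j, 2R_0)$), together with the convention that the denominator $\sum_j \varphi_j(Y)$ is nonzero precisely where at least one $\varphi_j$ is active. Thus the lemma is a purely geometric consequence of the size of the $R_0$-scale relative to the distance of $X$ to the boundary.
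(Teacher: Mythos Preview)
Your argument is correct and is exactly the natural elementary proof: rewrite $A'-A_0$ via \eqref{tt-a'}, then use the triangle inequality to see that $B(X,\delta(X)/2)$ misses every $B(Q_j,2R_0)$ when $\delta(X)>4R_0$, so $\sum_j\psi_j\equiv 0$ there. The paper does not include a proof of this lemma here (it is quoted from \cite{MPT}), and your reasoning matches the intended justification.
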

\begin{lemma}[\cite{MPT}]\label{claim2-p4}
If $\omega'$ denotes the elliptic measure associated to $L'={\rm{div}}A'\nabla$ with pole at $0$, then $\omega_1\in B_p(\omega_0)$ if and only if $\omega'\in B_p(\omega_0)$. Here 
we assume that both $\omega_0$ and $\omega_1$ have pole at 0.
\end{lemma}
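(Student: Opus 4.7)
The plan is to show that the two measures $\omega_1^{0}$ and $\omega'^{0}$ are mutually absolutely continuous with Radon--Nikodym derivative bounded above and below by constants depending only on $R_0$, the ellipticity bounds, and the NTA constants of $\Omega$. Once this is established, the conclusion follows immediately: since $k_1=d\omega_1/d\omega_0$ and $k'=d\omega'/d\omega_0$ then differ only by a bounded multiplicative factor, the reverse H\"older averages on any surface ball $B(Q,r)\cap \partial\Omega$ are equivalent, so $\omega_1\in B_p(\omega_0)$ if and only if $\omega'\in B_p(\omega_0)$.

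The key structural observation is that the balls $\{B(Q_i,R_0/2)\}_{i=1}^{M}$ cover $\partial\Omega$ and the normalized partition of unity satisfies $\sum_j \psi_j\equiv 1$ throughout $\bigcup_i B(Q_i,R_0)$; hence $A'(X)=A_1(X)$ on the neighborhood $N=\{X\in\Omega:\delta(X)<R_0/2\}$ of $\partial\Omega$. Consequently both Green's functions $G_1(\cdot,0)$ and $G'(\cdot,0)$ solve the \emph{same} equation $L_1 u=0$ in $N$ and vanish continuously on $\partial\Omega$. Using Lemma \ref{lem2.3}, the desired comparability $\omega_1^{0}(B(Q,r))\sim \omega'^{0}(B(Q,r))$ reduces to the uniform estimate $G_1(A(Q,r),0)\sim G'(A(Q,r),0)$. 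For small $r$ (say $r\le R_0/16$), the comparison principle Lemma \ref{lem2.5} applied to $G_1(\cdot,0)$ and $G'(\cdot,0)$ on $\Omega\cap B(Q,R_0/4)$ gives
$$\frac{G_1(A(Q,r),0)}{G'(A(Q,r),0)}\;\sim\;\frac{G_1(A(Q,R_0/8),0)}{G'(A(Q,R_0/8),0)}.$$

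To close the argument I would show the right-hand side is bounded above and below by a constant independent of $Q$. Every corkscrew point $A(Q,R_0/8)$ lies at uniform depth $\sim R_0$ inside the bounded NTA domain $\Omega$, so any such point can be connected to a fixed reference point $X_0$ (with $\delta(X_0)\sim \delta(0)/2$ and $|X_0|\sim \delta(0)/2$) by a Harnack chain of bounded length staying inside $\{\delta>R_0/16\}\setminus B(0,R_0/16)$, thereby avoiding both $\partial\Omega$ and the pole of the Green's function. Harnack's inequality applied to $G_1(\cdot,0)$ and separately to $G'(\cdot,0)$ along this chain yields $G_i(A(Q,R_0/8),0)\sim G_i(X_0,0)$ uniformly in $Q$, and hence the ratio above is comparable to the fixed number $G_1(X_0,0)/G'(X_0,0)$. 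Large scales $r\gtrsim R_0$ are handled by a finite iteration of the same argument together with the doubling property of elliptic measure. The main obstacle is securing uniformity of the Harnack-chain length across all $Q\in\partial\Omega$; this rests on the boundedness of $\Omega$ together with the NTA corkscrew condition, which guarantee that all relevant interior points sit at comparable depth, uniformly away from both $\partial\Omega$ and $0$.
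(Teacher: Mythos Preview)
The paper does not prove this lemma here; it is quoted from \cite{MPT} without argument, so there is no in-paper proof to compare against. Your approach is correct and is the natural one. The key structural observation---that $\sum_j\psi_j\equiv 1$ on the collar $\{\delta(X)<R_0/2\}$, hence $A'=A_1$ there---is exactly what is needed: on $\Omega\cap B(Q,R_0/4)$ both $G_1(\cdot,0)$ and $G'(\cdot,0)$ are nonnegative $L_1$-solutions vanishing on $\partial\Omega$, so Lemma~\ref{lem2.5} freezes their ratio at the reference scale, and Lemma~\ref{lem2.3} then transfers this to $\omega_1(\Delta)\sim\omega'(\Delta)$ for all small surface balls. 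The one point you flag---keeping the Harnack chain away from the pole---is harmless because $\delta(0)\ge 2^{30}R_0$, so $0$ sits far from the region $\{\delta\sim R_0\}$ containing the corkscrew points $A(Q,R_0/8)$, and any chain can be routed around a fixed ball about $0$ at the cost of $O(1)$ extra links. Alternatively, since $\omega_1$ and $\omega'$ are doubling probability measures on $\partial\Omega$ with constants depending only on the NTA and ellipticity data, one has $\omega_1(\Delta(Q,R_0))\sim 1\sim\omega'(\Delta(Q,R_0))$ uniformly in $Q$, which via Lemma~\ref{lem2.3} handles the reference scale directly and lets you bypass the fixed interior point $X_0$ altogether.
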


One of the main results in \cite{MPT} concerns the regularity of the elliptic measure of perturbation operators in CADs. In particular it was shown that if a Carleson norm of the deviation function (see \ref{eqn:tt-a}) is small  then "good" properties of the elliptic measure are preserved. 

\begin{thm}[\cite{MPT}]\label{mainthm1MPT}
Let $\Omega$ be a CAD, $0\in\Omega$ and $\omega_0$, $\omega_1$ are the elliptic measures associated with $L_0$ and $L_1$ respectively with pole 0. There exists 
$\varepsilon_0 >0$, depending also on the ellipticity constants, the dimension, the CAD constants such that if
\begin{equation}\label{condThm2.11}
\sup_{\Delta \subseteq \partial \Omega}\bigg\{\frac{1}{\omega_0(\Delta)}\int_{T(\Delta)}a^2(X)\frac{G_0(X)}{\delta^2(X)}dX\bigg\}^{1/2}\le
 \varepsilon_0\quad then\quad \omega_1\in B_2(\omega_0).
\end{equation}
Here $T(\Delta)=B(Q,r)\cap \Omega$ is the tent associated to the surface ball $\Delta=\Delta_r(Q)=B(Q,r)\cap \partial \Omega$  and $G_0(X)=G_0(0,X)$ denotes the Green's function for $L_0$ in $\Omega$ with pole at $0$.
\end{thm}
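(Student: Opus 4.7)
The condition $\omega_1\in B_2(\omega_0)$ is equivalent to $L^2(\omega_0)$-solvability of the Dirichlet problem for $L_1$: for every continuous $f$ on $\partial\Omega$ with $L_1$-harmonic extension $u_1$, one seeks $\|Nu_1\|_{L^2(\omega_0)}\lesssim \|f\|_{L^2(\omega_0)}$, where $N$ denotes the non-tangential maximal function. My plan is to establish this by comparison with $L_0$ and absorption. Let $u_0$ be the $L_0$-harmonic extension of the same $f$, and set $v=u_1-u_0$, so that $v|_{\partial\Omega}=0$ and $L_0 v=\operatorname{div}((A_1-A_0)\nabla u_1)$. The $L_0$ maximum principle gives $\|Nu_0\|_{L^2(\omega_0)}\le \|f\|_{L^2(\omega_0)}$ (as $\omega_0$ is the $L_0$-elliptic measure), so it suffices to control $\|Nv\|_{L^2(\omega_0)}$ by $C\varepsilon_0\|Nu_1\|_{L^2(\omega_0)}$.

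For $v$ I would use Green's representation for $L_0$:
\begin{equation*}
v(X)=\int_{\Omega}\nabla_Y G_0(X,Y)\cdot(A_1-A_0)(Y)\,\nabla u_1(Y)\,dY
\end{equation*}
(up to sign). On each Whitney ball $B(Y,\delta(Y)/2)$ the interior Cacciopoli inequality (Lemma \ref{Cacciop}) gives $|\nabla_Y G_0(X,Y)|\lesssim G_0(X,Y)/\delta(Y)$, and the deviation function satisfies $|A_1-A_0|(Y)\le a(Y)$ by definition. Taking a non-tangential supremum in $X$, using the comparison principle (Lemma \ref{lem2.5}) to replace $G_0(X,Y)$ by $G_0(Y)=G_0(0,Y)$ up to constants, and applying Cauchy--Schwarz produce a pointwise estimate relating $(Nv(Q))^2$ to an integral over the non-tangential cone $\Gamma(Q)$ of $|\nabla u_1|^2\,a^2\,G_0/\delta^2$. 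The relevant Carleson measure on $\Omega$ is precisely $d\mu=a^2\,G_0/\delta^2\,dY$, with $\|\mu\|_{\mathcal{C}(\omega_0)}\le \varepsilon_0^2$ by hypothesis.

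Integrating against $d\omega_0$, swapping the order of integration (using Ahlfors regularity of $\sigma$ and Lemma \ref{lem2.3} to identify the $\omega_0$-mass of a surface ball with $r^{n-2}G_0$ at the associated corkscrew point), and applying a CAD-adapted Carleson embedding lemma converts the right-hand side into $\|\mu\|_{\mathcal{C}(\omega_0)}\int_{\partial\Omega}(Su_1)^2\,d\omega_0$, where $Su_1$ is the Lusin area integral. The square-function/non-tangential-maximal-function equivalence for $L_1$-solutions (via boundary Cacciopoli, Lemma \ref{bdCacciop}, and the H\"older ratio estimate, Lemma \ref{lem2.6}) finishes the bound $\|Nv\|_{L^2(\omega_0)}\le C\varepsilon_0\|Nu_1\|_{L^2(\omega_0)}$. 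The triangle inequality then yields $\|Nu_1\|_{L^2(\omega_0)}\le \|f\|_{L^2(\omega_0)}+C\varepsilon_0\|Nu_1\|_{L^2(\omega_0)}$, which absorbs for $\varepsilon_0$ small.

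The preliminary reductions of Lemmas \ref{claim1-p4} and \ref{claim2-p4} allow us to assume $a\equiv 0$ far from $\partial\Omega$, so the Carleson mass is automatically localized near the boundary. The main obstacle is implementing the Carleson embedding and the $S$/$N$ equivalence in a CAD, where the graph coordinates of the Lipschitz setting are unavailable; the CAD-adapted harmonic analysis of \cite{MPT} (NTA corkscrew points, Whitney decomposition, Lemmas \ref{lem2.3}, \ref{lem2.5}, \ref{lem2.6}, and the Ahlfors regularity of $\sigma$) supplies the required substitutes and makes the stopping-time step go through.
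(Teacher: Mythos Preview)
First, a framing point: this theorem is not proved in the present paper --- it is quoted from \cite{MPT} and used as input (see Remark~\ref{esch} and the proof of Lemma~\ref{lemma3}). So there is no in-paper proof to compare your proposal against; what follows is an assessment of your plan on its own terms.

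Your outline is the Fefferman--Kenig--Pipher perturbation scheme, and the broad strokes are correct. But two of your steps, as written, are genuine gaps rather than CAD technicalities.

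\emph{Circularity in the $S/N$ step.} You need $\|S u_1\|_{L^2(\omega_0)}\lesssim\|N u_1\|_{L^2(\omega_0)}$ for the $L_1$-solution $u_1$. Lemma~\ref{1510} delivers this only when the integrating measure lies in $A_\infty$ of the elliptic measure of the operator annihilating $u$; here that means $\omega_0\in A_\infty(\omega_1)$, which (for doubling measures) is equivalent to $\omega_1\in A_\infty(\omega_0)$ --- precisely what you are proving. Boundary Cacciopoli and Lemma~\ref{lem2.6} do not by themselves yield $S\lesssim N$ in $L^2(\omega_0)$; the $A_\infty$ hypothesis enters essentially in the good-$\lambda$ inequality behind Lemma~\ref{1510}. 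The argument in \cite{MPT} (mirrored by the dyadic computations in the proof of Lemma~\ref{lemma3} here) avoids $S(u_1)$ as an intermediary: the Whitney/dyadic decomposition of $\Omega$ and the Carleson hypothesis on $a^2 G_0\delta^{-2}\,dX$ are used to bound the error integral directly against $\varepsilon_0\,\|N u_1\|_{L^2(\omega_0)}$ via a stopping-time argument on $\omega_0$-dyadic cubes, never invoking $S/N$ for $L_1$-solutions.

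\emph{A priori finiteness for the absorption.} The inequality $\|Nu_1\|\le \|f\|+C\varepsilon_0\|Nu_1\|$ is vacuous unless $\|Nu_1\|_{L^2(\omega_0)}<\infty$ to begin with; for merely bounded measurable $A_1$ and $f\in L^2(\omega_0)$ this is not automatic. The standard remedy is to run the estimate along the family $L_t=(1-t)L_0+tL_1$ (or for smooth truncations of $A_1-A_0$) and use continuity in $t$ to propagate the bound from $t=0$. Your proposal does not supply this step.
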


Note that (\ref{condThm2.11}) and the Carleson measure property (\ref{normfkp}) relate as follows.

\begin{prop}[\cite{MPT}]\label{p-fkp}
Let $\Omega$ be a CAD and that assume $\omega_0\in B_p(\sigma)$ for some $p>1$. 
Given $\epsilon>0$ there exists $\delta>0$ such that if
\begin{equation}\label{small-carl}
\sup_{\Delta \subseteq \partial \Omega}
\bigg\{\frac{1}{\sigma(\Delta)}\int_{T(\Delta)}\frac{a^2(X)}{\delta(X)}dX\bigg\}^{1/2}\le
 \delta,
\end{equation}
then
\begin{equation}\label{small-fkp}
\sup_{\Delta \subseteq \partial \Omega}\bigg\{\frac{1}{\omega_0(\Delta)}\int_{T(\Delta)}a^2(X)\frac{G_0(X)}{\delta^2(X)}dX\bigg\}^{1/2}\le
\epsilon.
 \end{equation}
\end{prop}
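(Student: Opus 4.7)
The plan is to convert the $\omega_0$-Carleson integrand $a^2 G_0/\delta^2$ into a $\sigma$-Carleson integrand by means of a pointwise Green function estimate, and then transfer smallness from $\sigma$ to $\omega_0$ via the $B_p$ hypothesis and standard weight theory on the space of homogeneous type $(\partial\Omega,\sigma)$.

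Fix $\Delta=\Delta_r(Q_0)\subset\partial\Omega$ and, for $X\in T(\Delta)$, let $\hat X\in\partial\Omega$ be a closest boundary point to $X$. Combining Lemma \ref{lem2.3} applied at the corkscrew point $A(\hat X,\delta(X))$, a Harnack chain in the NTA domain joining $A(\hat X,\delta(X))$ to $X$, and the Ahlfors regularity (\ref{1.7A}), one obtains the key pointwise identification
$$\frac{G_0(X)}{\delta^2(X)}\approx \frac{1}{\delta(X)}\cdot\frac{\omega_0(\Delta_{\delta(X)}(\hat X))}{\sigma(\Delta_{\delta(X)}(\hat X))}.$$
Decompose $T(\Delta)$ into Whitney boxes $\{B\}$ of size $\approx\delta(X)$, each assigned a surface shadow $\Delta_B\subset\Delta^*:=\Delta_{2r}(Q_0)$ of radius $\approx\mathrm{diam}(B)$. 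On each $B$ the above ratio is essentially constant and comparable to $\omega_0(\Delta_B)/\sigma(\Delta_B)$, so the integral reduces to
$$\int_{T(\Delta)}a^2\frac{G_0}{\delta^2}\,dX \lesssim \sum_{B} \frac{\omega_0(\Delta_B)}{\sigma(\Delta_B)}\int_B \frac{a^2}{\delta}\,dX.$$

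Apply a layer-cake/stopping time argument. For $\lambda>0$, the shadows $\Delta_B$ with $\omega_0(\Delta_B)/\sigma(\Delta_B)>\lambda$ all lie in $S_\lambda:=\{Q\in\Delta^{**}:\sup_{0<s\le 2r}\omega_0(\Delta_s(Q))/\sigma(\Delta_s(Q))>\lambda\}$, and a Vitali cover of $S_\lambda$ by maximal stopping balls $\{\Delta_{\lambda,i}\}$ (on which the ratio first exceeds $\lambda$) places the corresponding boxes inside $\bigcup_i T(C\Delta_{\lambda,i})$. Applying the hypothesis (\ref{small-carl}) on each $\Delta_{\lambda,i}$, summing with bounded overlap, and invoking the layer-cake identity gives
$$\sum_{B} \frac{\omega_0(\Delta_B)}{\sigma(\Delta_B)}\int_B\frac{a^2}{\delta}\,dX \;\lesssim\; \delta^2 \int_{\Delta^*} M_\sigma^{\mathrm{loc}}k_0\,d\sigma,$$
where $M_\sigma^{\mathrm{loc}}$ is the localized $\sigma$-maximal operator on $(\partial\Omega,\sigma)$. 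Finally, Hölder's inequality with exponents $p,p'$, the $L^p(\sigma)$-boundedness of $M_\sigma$, the reverse Hölder $B_p$ bound $\bigl(\sigma(\Delta^{**})^{-1}\int_{\Delta^{**}}k_0^p\,d\sigma\bigr)^{1/p}\lesssim \omega_0(\Delta^{**})/\sigma(\Delta^{**})$, Ahlfors regularity, and doubling of $\omega_0$ (a consequence of $\omega_0\in A_\infty(\sigma)$) chain together to yield $\int_{\Delta^*}M_\sigma^{\mathrm{loc}}k_0\,d\sigma\lesssim \omega_0(\Delta)$. Hence $\omega_0(\Delta)^{-1}\int_{T(\Delta)}a^2 G_0/\delta^2\,dX\lesssim \delta^2$ uniformly in $\Delta$, and choosing the Carleson constant in (\ref{small-carl}) sufficiently small relative to $\epsilon$ completes the proof.

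The main obstacle is the Green function identification in the first step: on a Lipschitz domain this is classical, but on a CAD one must rely solely on the NTA/Ahlfors structure (no preferred coordinate direction) to pass from Lemma \ref{lem2.3} at the corkscrew point to a pointwise estimate at a generic $X\in\Omega$, and to define the Whitney shadow $\Delta_B$ with the doubling properties needed for the weight-theoretic steps that follow. Once that identification is in hand, the remainder of the argument is standard weight theory on the space of homogeneous type $(\partial\Omega,\sigma)$, which the CAD hypothesis guarantees.
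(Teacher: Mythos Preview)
The paper does not actually prove this proposition: it is quoted verbatim from \cite{MPT} and no argument is given here, so there is no ``paper's own proof'' to compare against. That said, your approach is the natural one and almost certainly tracks the argument in \cite{MPT}: the pointwise identification $G_0(X)/\delta(X)^2 \approx \delta(X)^{-1}\,\omega_0(\Delta_{\delta(X)}(\hat X))/\sigma(\Delta_{\delta(X)}(\hat X))$ is exactly Lemma~\ref{lem2.3} plus Ahlfors regularity plus Harnack, and the Whitney/stopping-time reduction to $\int M_\sigma k_0\,d\sigma$ is the standard way to pass between $\sigma$- and $\omega_0$-Carleson conditions when $\omega_0\in A_\infty(\sigma)$. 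Indeed, this paper carries out closely analogous computations elsewhere (see the estimates (\ref{3.8A})--(\ref{mmD}) in Lemma~\ref{lemma3}) using Christ's dyadic cubes $Q_\alpha^k$ and their interior shadows $I_\alpha^k$ in place of your generic Whitney boxes; your $\Delta_B$ is their $Q_\alpha^k$ and your Whitney box is their $I_\alpha^k$.

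One small caution: your final chain uses $\|M_\sigma k_0\|_{L^1(\Delta^*,\sigma)}\lesssim \omega_0(\Delta)$ via H\"older and the $B_p$ reverse H\"older inequality. Make sure you localize the maximal operator to scales $\le Cr$ before applying the $L^p$ bound, since the global Hardy--Littlewood maximal function is not bounded on $L^1$; the localized version together with the reverse H\"older estimate on $\Delta^{**}$ does give the desired control. With that detail in place the argument is sound.
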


An immediate consequence of Theorem \ref{mainthm1MPT} deals with the  $L^r(d\sigma)$-integrability of  $k_1=\frac{d\omega_1}{d\sigma}$ provided that a suitable condition is assumed for $\omega_0$.

\begin{thm}[\cite{MPT}]\label{pp'}
Let $\Omega$ be a CAD and $\omega_0$, $\omega_1$ be as in Theorem \ref{mainthm1MPT}. If $\omega_1\in B_p(\omega_0)$ for some $1<p<\infty$ and $\omega_0\in B_q(\sigma)$ then $\omega_1\in B_r(\sigma)$ with $r=\frac{qp}{q+p-1}<q$.
\end{thm}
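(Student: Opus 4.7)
The plan is to prove Theorem \ref{pp'} by a direct application of H\"older's inequality, exploiting the compatibility of the two reverse H\"older conditions on a common family of surface balls. Since $\omega_0\in B_q(\sigma)$ forces $\omega_0\ll\sigma$ and $\omega_1\in B_p(\omega_0)$ forces $\omega_1\ll\omega_0$, I would first introduce $h=d\omega_1/d\omega_0$, so that $k_1=h\,k_0$. For a surface ball $\Delta\subset\partial\Omega$, rewriting the integral of $k_1^r$ against $\sigma$ in terms of $\omega_0$ and then applying H\"older's inequality with conjugate exponents $p/r$ and $p/(p-r)$ gives
\[
\int_\Delta k_1^r\,d\sigma=\int_\Delta h^r k_0^{r-1}\,d\omega_0\le \bigg(\int_\Delta h^p\,d\omega_0\bigg)^{r/p}\bigg(\int_\Delta k_0^{(r-1)p/(p-r)}\,d\omega_0\bigg)^{(p-r)/p}.
\]
Both H\"older exponents exceed $1$ because $r<p$, which is an algebraic consequence of $q>1$ given the definition of $r$.

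The key algebraic observation is that with $r=qp/(q+p-1)$ one has $(r-1)p/(p-r)=q-1$, so the second factor collapses to $\int_\Delta k_0^{q-1}\,d\omega_0=\int_\Delta k_0^q\,d\sigma$. The $B_p(\omega_0)$ hypothesis bounds the first factor by a constant multiple of $\omega_0(\Delta)^{1-p}\omega_1(\Delta)^p$, while $B_q(\sigma)$ bounds $\int_\Delta k_0^q\,d\sigma$ by a constant multiple of $\sigma(\Delta)^{1-q}\omega_0(\Delta)^q$. Substituting these and collecting exponents, the exponent of $\omega_0(\Delta)$ equals $r(1/p-1)+q(p-r)/p$, which a short calculation shows vanishes identically, while the exponent of $\sigma(\Delta)$ equals $(1-q)(p-r)/p=1-r$. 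The result is $\int_\Delta k_1^r\,d\sigma\le C\,\sigma(\Delta)^{1-r}\omega_1(\Delta)^r$, which is exactly $\omega_1\in B_r(\sigma)$.

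No analysis beyond H\"older and the two reverse H\"older hypotheses should be required; in particular no further geometric input from the CAD structure is invoked, since both hypotheses are already expressed in terms of surface balls. The only delicate point is the exponent bookkeeping, and the precise value $r=qp/(q+p-1)$ is forced by two simultaneous constraints: the auxiliary power of $k_0$ produced by H\"older must match the exponent $q$ dictated by $B_q(\sigma)$, and the resulting scaling in $\sigma(\Delta)$ and $\omega_1(\Delta)$ must be the one required by $B_r(\sigma)$. The strict inequality $r<q$ asserted in the theorem also drops out of $p>1$, so I do not anticipate any substantive obstacle beyond carrying out this arithmetic carefully.
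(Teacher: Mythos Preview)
Your proposal is correct and is essentially the same argument as the paper's: both apply H\"older's inequality with conjugate exponents $p/r$ and $p/(p-r)$ (the paper writes these as $q/(q-(1-1/p)r)$ and $q/((1-1/p)r)$, which coincide for $r=qp/(q+p-1)$), then invoke the two reverse H\"older hypotheses and collect exponents. The only cosmetic difference is that you integrate $h^r k_0^{r-1}$ against $d\omega_0$ while the paper splits $k_0^r=k_0^{r/p}k_0^{r(1-1/p)}$ and integrates against $d\sigma$; these are the same computation.
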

\begin{proof}Consider $r=\frac{qp}{q+p-1}$ and let $h=d\omega_1/d\omega_0$, $k_0=d\omega_0/d\sigma$ and $k_1=d\omega_1/d\sigma$.
Then
$$\int_\Delta k_1^rd\sigma=\int_\Delta h^rk_0^{r/p}k_0^{r(1-1/p)}d\sigma\leq \bigg(\int_\Delta (h^rk_0^{r/p})^{q/q-(1-1/p)r}d\sigma\bigg)^{\frac{q-(1-1/p)r}{q}}
\bigg(\int_\Delta k_0^qd\sigma\bigg)^{\frac{r(1-1/p)}{q}}$$
that is,
$$\int_\Delta k_1^rd\sigma \leq \bigg(\int_\Delta h^pd\omega_0\bigg)^{\frac{q-(1-1/p)r}{q}}\bigg(\int_\Delta k_0^qd\sigma\bigg)^{\frac{r}{q(1-1/p)}}$$
or by the selection of $r$,
$$\int_\Delta k_1^rd\sigma \leq \bigg(\int_\Delta h^pd\omega_0\bigg)^{\frac{q}{q+p-1}}\bigg(\int_\Delta k_0^qd\sigma\bigg)^{\frac{p-1}{p+q-1}}.$$
Since
$$\int_{\Delta}k_0^qd\sigma \leq \sigma(\Delta) \bigg({\fint}_\Delta k_0 d\sigma\bigg)^q\ \ {\rm{and}}\ \ \int_{\Delta}h^pd\omega_0 \leq
\omega_0(\Delta) \bigg({\fint}_\Delta h d\omega_0\bigg)^p$$
we conclude that
$$\int_\Delta k_1^rd\sigma \lesssim \bigg(\int_\Delta k_1 d\sigma\bigg)^r\sigma(\Delta)^{1-r}$$
or
$$\bigg({\fint}_\Delta k_1^r d\sigma\bigg)^{1/r}\lesssim {\fint}_\Delta k_1d\sigma$$
and the proof is complete since $r=\frac{qp}{q+p-1}<q$.
\end{proof}

Throughout the paper we shall use the notation $a\lesssim b$ to mean that there is a constant $C>0$ such that $a\le Cb$.

A slight improvement of the result in Theorem \ref{mainthm1MPT} can be obtained due to an argument of Gehring (\cite{G}, Lemma 2), see also the book of Grafakos (\cite{Gr}).
\begin{lemma}\label{gehring}
Let $\Omega$ be a CAD and $\omega_0$, $\omega_1$ be as in Theorem \ref{mainthm1MPT}. If condition (\ref{condThm2.11}) is satisfied then there exists a constant $\eta_0>0$ which depends only on the constant $\varepsilon_0$ which appears in (\ref{condThm2.11}), the CAD and ellipticity constants such that $\omega_1\in B_{2(1+\eta_0)}(\omega_0)$.
\end{lemma}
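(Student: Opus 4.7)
The plan is to apply Gehring's self-improvement lemma for reverse H\"older classes to the density $h=d\omega_1/d\omega_0$ in the space of homogeneous type $(\partial\Omega,|\cdot|,\omega_0)$. By Theorem \ref{mainthm1MPT}, hypothesis (\ref{condThm2.11}) already yields $\omega_1\in B_2(\omega_0)$, i.e., there is a constant $C_0=C_0(\varepsilon_0,n,\lambda,\Lambda,\text{CAD})$ such that
$$
\left(\fint_\Delta h^2\,d\omega_0\right)^{1/2}\le C_0\fint_\Delta h\,d\omega_0\qquad\text{for every surface ball }\Delta\subseteq\partial\Omega.
$$

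First I would verify that the triple $(\partial\Omega,|\cdot|,\omega_0)$ is a space of homogeneous type in the sense suitable for Gehring's argument. The surface balls $\Delta_r(Q)=B(Q,r)\cap\partial\Omega$ form a basis, and $\omega_0$ is doubling on these balls: this is the classical doubling property of elliptic measure in NTA domains, following from Lemma \ref{lem2.3}, the comparison principle (Lemma \ref{lem2.5}), and the Harnack chain condition. Thus Calder\'on--Zygmund decompositions and Vitali-type covering arguments are available on $(\partial\Omega,\omega_0)$.

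Next I would run the standard Gehring scheme as in \cite{G,Gr}. Fix a surface ball $\Delta_0$ and normalize so that $\fint_{\Delta_0}h\,d\omega_0=1$. For $\lambda> 1$ large, perform a Calder\'on--Zygmund stopping-time decomposition of $h$ on $\Delta_0$ at level $\lambda$, producing disjoint subballs $\{\Delta_j\}$ with $\fint_{\Delta_j}h\,d\omega_0\sim\lambda$ and $h\le\lambda$ off $\bigcup_j\Delta_j$. Applying the $B_2$ inequality on each $\Delta_j$ and summing yields a good-$\lambda$ inequality of the form
$$
\int_{\{h>A\lambda\}\cap\Delta_0} h\,d\omega_0\le\frac{\theta}{A}\int_{\{h>\lambda\}\cap\Delta_0}h\,d\omega_0,
$$
with $\theta=\theta(C_0)\in(0,1)$ after choosing $A$ large. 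Multiplying by $\lambda^{\eta-1}$ and integrating in $\lambda$ gives, for every $\eta\in(0,\eta_0)$ with $\eta_0=\eta_0(C_0,\text{doubling})$,
$$
\int_{\Delta_0}h^{1+\eta}\,d\omega_0\le C(\eta)\,\omega_0(\Delta_0),
$$
which, combined once more with the $B_2$ bound (or, more directly, by running the decomposition starting at level $\lambda>\fint h^2\,d\omega_0$), upgrades to
$$
\left(\fint_{\Delta_0}h^{2(1+\eta_0)}\,d\omega_0\right)^{1/2(1+\eta_0)}\le C_1\fint_{\Delta_0}h\,d\omega_0,
$$
i.e., $\omega_1\in B_{2(1+\eta_0)}(\omega_0)$.

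The only delicate point is the good-$\lambda$ step, which requires that the Calder\'on--Zygmund cubes on $\partial\Omega$ behave well under the doubling measure $\omega_0$; this is automatic once one works with Christ-type dyadic cubes on the space of homogeneous type $(\partial\Omega,\omega_0)$, and the resulting $\eta_0$ depends only on $C_0$ and the doubling constant of $\omega_0$, hence ultimately on $\varepsilon_0$, the CAD constants, and the ellipticity constants, as claimed.
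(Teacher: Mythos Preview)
Your proposal is correct and follows exactly the route the paper indicates: the paper does not give a proof but simply states that the result follows from Gehring's self-improvement argument (\cite{G}, Lemma~2; see also \cite{Gr}), and you have written out precisely that argument---checking that $(\partial\Omega,|\cdot|,\omega_0)$ is a space of homogeneous type so that the Calder\'on--Zygmund machinery applies, and then running the standard good-$\lambda$ scheme to upgrade the reverse H\"older exponent. The dependence of $\eta_0$ on the data is tracked correctly.
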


Once we combine Theorem \ref{pp'} along with Lemma \ref{gehring} we obtain the following corollary.

\begin{cor}\label{surface}
 Let $\Omega$ be a CAD and $\omega_0$, $\omega_1$ be as in Theorem \ref{mainthm1MPT}. For $\delta_0>0$ small enough there exists $q_0$ large enough depending only on the CAD constants, the dimension and the ellipticity constants such that if $\omega_0\in B_{q_0}(\sigma)$ then $\omega_1\in B_{2(1+\delta_0)}(\sigma)$.
\end{cor}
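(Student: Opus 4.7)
The plan is to combine Lemma \ref{gehring} (self-improvement of the $B_p(\omega_0)$ exponent) with the interpolation-style estimate in Theorem \ref{pp'} (passage from $B_p(\omega_0)$ and $B_q(\sigma)$ to $B_r(\sigma)$). The idea is straightforward: Gehring's lemma lifts the $B_2(\omega_0)$ conclusion of Theorem \ref{mainthm1MPT} to some $B_{2(1+\eta_0)}(\omega_0)$, and Theorem \ref{pp'} then transfers this integrability gain to the surface measure provided we can ``spend'' enough $B_q(\sigma)$-regularity on $\omega_0$, i.e.\ choose $q_0$ large.

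More concretely, first I would invoke Theorem \ref{mainthm1MPT} under hypothesis (\ref{condThm2.11}) to conclude $\omega_1 \in B_2(\omega_0)$, and then apply Lemma \ref{gehring} to obtain $\omega_1 \in B_{2(1+\eta_0)}(\omega_0)$, where $\eta_0>0$ depends only on the parameter $\varepsilon_0$ from (\ref{condThm2.11}), the CAD constants and the ellipticity constants. This $\eta_0$ is now fixed by the data of the problem. I would then assume $\delta_0>0$ is small enough that $\delta_0 < \eta_0$; this is the smallness requirement on $\delta_0$ that the statement alludes to.

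Next I would apply Theorem \ref{pp'} with $p = 2(1+\eta_0)$ and $q = q_0$, which yields $\omega_1 \in B_r(\sigma)$ with
\[
r \;=\; \frac{q_0\,p}{q_0+p-1} \;=\; \frac{2(1+\eta_0)\,q_0}{q_0+2(1+\eta_0)-1}.
\]
Since $r\nearrow p = 2(1+\eta_0)$ as $q_0\to\infty$ and $2(1+\delta_0)<2(1+\eta_0)$, a direct algebraic manipulation shows that $r \ge 2(1+\delta_0)$ whenever
\[
q_0 \;\ge\; \frac{(1+\delta_0)(1+2\eta_0)}{\eta_0-\delta_0},
\]
and for such $q_0$ the inclusion $B_r(\sigma)\subset B_{2(1+\delta_0)}(\sigma)$ (higher reverse-H\"older integrability implies lower) completes the proof.

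There is no real analytic obstacle here: both inputs have already been established in the preceding lemmas, and the argument is essentially a parameter-selection exercise. The only point requiring a little care is ordering the quantifiers correctly, namely that $\eta_0$ is determined intrinsically by the CAD, the ellipticity constants and $\varepsilon_0$, so the allowable range of $\delta_0$ (and hence the threshold $q_0$) must be chosen after $\eta_0$ is fixed. I would make this explicit in the write-up to avoid any circularity between the choice of $\delta_0$ and $\eta_0$.
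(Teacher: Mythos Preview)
Your proposal is correct and follows exactly the approach the paper indicates: the corollary is stated there as an immediate consequence of combining Lemma~\ref{gehring} with Theorem~\ref{pp'}, and your write-up simply makes explicit the parameter choice $p=2(1+\eta_0)$, $q=q_0$, and the elementary algebra yielding $r\ge 2(1+\delta_0)$ once $\delta_0<\eta_0$ and $q_0$ is taken large. The only redundancy is that your Step~1 (invoking Theorem~\ref{mainthm1MPT} to get $\omega_1\in B_2(\omega_0)$) is already subsumed in the statement of Lemma~\ref{gehring}.
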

%\begin{proof} Assume that $\omega_0\in B_{q_0}(\sigma)$ where $q_0$ is large to be chosen. From Theorem \ref{mainthm1MPT} and Lemma \ref{gehring} we have that $\omega_1\in B_{p_0}(\omega_0)$ with $p_0=2+\epsilon$. Therefore from Lemma \ref{pp'} we obtain that $\omega_1\in B_{\frac{p_0q_0}{p_0+q_0-1}}(\sigma)$ and
%$$0<\frac{p_0q_0}{p_0+q_0-1}$$
%\end{proof}
%\begin{preremark}\label{w_t_s}
%For $t\in[0,1]$ consider the family of operators 
%\begin{eqnarray}
%L_tu &=& {\rm{div}}(A_t\nabla u ) \nonumber\\
%A_t(X) &=& (1-t)A_0(X)+tA_1(X).\nonumber
%\end{eqnarray}
%for $X\in \Omega$ and let $\omega_t$ be the corresponding $L_t-$elliptic measure. Then the result in Corollary \ref{surface} holds for $\omega_t$ and the $B_{2(1+\delta_0)}$-constant  is independent of $t\in[0,1]$.   
%\end{preremark}

In the sequel we denote the area integral and the nontagential maximal function by
 $$S_M(u)(Q)=\bigg(\int_{\Gamma_M(Q)}|\nabla u(X)|^2 {\delta(X)}^{2-n}dX\bigg)^{1/2}\ \ {\rm{and}}\ \ N(u)(Q)=\sup\{|u(X)|: X\in \Gamma_M(Q)\} $$
respectively where for $Q\in\partial\Omega$
\begin{equation}\label{cone-a}
\Gamma_M(Q)=\{X \in \Omega: |X-Q|< (1+M)\delta(X)\}.
\end{equation}
The following lemma will be used in Section \ref{mainresult}.
\begin{lemma}[\cite{k1}]\label{1510}
Let $\mu\in A_\infty(d\omega)$, $0\in\Omega$
Then if $Lu=0$ and $0<p<\infty$,
$$\bigg(\int_{\partial\Omega}(S_\alpha(u))^pd\mu\bigg)^{1/p}\le
 C_{\alpha,p}\bigg(\int_{\partial\Omega}(N_\alpha(u))^pd\mu\bigg)^{1/p}.$$
If in addition $u(0)=0$ then
$$\bigg(\int_{\partial\Omega}(N_\alpha(u))^pd\mu\bigg)^{1/p}\le
 C_{\alpha,p}\bigg(\int_{\partial\Omega}(S_\alpha(u))^pd\mu\bigg)^{1/p}.$$
 \end{lemma}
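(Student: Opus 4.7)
The plan is to establish both inequalities via the classical good-$\lambda$ method, which converts pointwise/distributional comparisons of $S_\alpha u$ and $N_\alpha u$ under $\omega$-measure into $L^p(d\mu)$ comparisons using the hypothesis $\mu\in A_\infty(d\omega)$.

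First I would prove the following good-$\lambda$ inequalities with respect to the elliptic measure $\omega$: there exist constants $C>0$ and $\beta>0$ such that for every $\lambda>0$ and every $\gamma\in(0,1)$
\begin{equation*}
\omega\bigl\{Q\in\partial\Omega: S_\alpha u(Q)>2\lambda,\ N_\alpha u(Q)\le \gamma\lambda\bigr\}\le C\gamma^{2}\,\omega\bigl\{Q: S_\alpha u(Q)>\lambda\bigr\},
\end{equation*}
and, provided $u(0)=0$, the symmetric estimate
\begin{equation*}
\omega\bigl\{Q\in\partial\Omega: N_\alpha u(Q)>2\lambda,\ S_\alpha u(Q)\le \gamma\lambda\bigr\}\le C\gamma^{\beta}\,\omega\bigl\{Q: N_\alpha u(Q)>\lambda\bigr\}.
\end{equation*}
The standard route is to decompose the level set $\{S_\alpha u>\lambda\}$ (respectively $\{N_\alpha u>\lambda\}$) into boundary Whitney-type pieces $\Delta_j$, reduce the estimate to a single piece, and on that piece truncate $u$ using a cutoff adapted to the region $\{N_\alpha u\le \gamma\lambda\}$. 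The key identity is the integration by parts
\begin{equation*}
\int_{\Omega}|\nabla u|^{2}\varphi\,G_0(X,\cdot)\,dX\sim \int_{\partial\Omega}(\cdots)\,d\omega,
\end{equation*}
which, combined with the Caccioppoli inequality (Lemma~\ref{Cacciop}) and the relation between the Green function and elliptic measure (Lemma~\ref{lem2.3}), yields the desired quadratic gain in $\gamma$ for the $S_\alpha\!\to\!N_\alpha$ direction. The reverse inequality uses H\"older continuity at the boundary (Lemma~\ref{lem2.6}) to deduce that if $S_\alpha u$ is small near $Q$ then $u$ is close to a constant on a truncated cone, and that constant must be small because $u(0)=0$ propagates via Harnack-chain arguments through the NTA structure.

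Next I would convert these $\omega$-good-$\lambda$ inequalities into $\mu$-good-$\lambda$ inequalities. Since $\mu\in A_\infty(d\omega)$, there exist $\theta,C'>0$ such that for any surface ball $\Delta$ and any measurable $E\subset \Delta$,
\begin{equation*}
\frac{\mu(E)}{\mu(\Delta)}\le C'\left(\frac{\omega(E)}{\omega(\Delta)}\right)^{\theta}.
\end{equation*}
Applying this on each Whitney piece $\Delta_j$ and summing converts the $\gamma^{2}$ (resp.\ $\gamma^{\beta}$) factor into $\gamma^{2\theta}$ (resp.\ $\gamma^{\beta\theta}$), which is all that is needed. Integrating the resulting $\mu$-good-$\lambda$ inequality against $p\lambda^{p-1}\,d\lambda$ and choosing $\gamma$ small (so that $C\gamma^{2\theta}2^{p}<1/2$) absorbs the distribution function of $S_\alpha u$ into the left-hand side and yields the desired $L^p(d\mu)$ comparison; the same bookkeeping gives the reverse inequality.

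The main obstacle is establishing the first good-$\lambda$ inequality in a clean way on a CAD rather than a Lipschitz domain: the cutoff and truncation argument requires a careful Whitney-type stopping-time construction on $\partial\Omega$ compatible with the NTA/Ahlfors-regular geometry, and the integration-by-parts step must be justified without a graph representation. Once that is in place, everything else is a routine good-$\lambda$ argument. The hypothesis $u(0)=0$ is essential for the reverse inequality: without it, a constant solution shows that $S_\alpha u\equiv 0$ while $N_\alpha u\not\equiv 0$, so some normalization at an interior point is unavoidable, and the Harnack chain from $0$ to any non-tangential approach point supplies exactly this normalization.
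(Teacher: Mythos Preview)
The paper does not prove this lemma: it is quoted verbatim from \cite{k1} (Kenig's CBMS monograph) and used as a black box, so there is no ``paper's own proof'' to compare against. Your outline via good-$\lambda$ inequalities with respect to $\omega$, followed by the $A_\infty$ transfer to $\mu$ and integration in $\lambda$, is exactly the standard argument found in that reference (going back to Dahlberg--Jerison--Kenig), and your identification of the role of $u(0)=0$ in the reverse direction is correct.
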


Suppose also that $f$ is a measurable function defined in $\Omega$. For $\alpha>0$ and $Q \in \partial \Omega$, we define 
\begin{equation}\label{tt-square}
 A^{(\alpha)}(f)(Q)= \bigg( \int_{\Gamma_\alpha(Q)} {f(X)}^2 \frac{dX}{{\delta(X)}^{n}}\bigg)^{1/2}.
 \end{equation}
The usual square function of $f$ corresponds to $A(f)=A^{(1)}(f)$. We define the operator $\mathcal{C}(f):\partial \Omega \rightarrow \mathbb{R}$ by
\begin{equation}\label{tt-carleson}
\mathcal{C}(f)(Q):= \sup_{Q \in \Delta}\bigg(\frac{1}{\sigma(\Delta)} \int_{T(\Delta)}f(X)^2\frac{dX}{\delta(X)}\bigg)^{1/2}
 \end{equation}
where $\Delta$ is a surface ball and $T(\Delta)$ is the tent over it.

In the present paper we use the same family of dyadic cubes in $\po$ as the one used in \cite{MPT}. The \emph{shadows} of the dyadic cubes in $\O$ provide a good covering 
of $\O\cap (\po, 4R_0):=\O\cap \{Y\in \R^n :\exists Q_Y\in\partial\Omega\ {\rm{with}}\ |Q_Y-Y|=\delta(Y)\le
 4R_0\}$. To ease the readers task we recall some of their main properties.
 Since $\Omega$ is a CAD in $\R^n$, both $\sigma=\mathcal{H}^{n-1}\res \partial\Omega$ and $\omega_0$ are doubling measures and therefore $(\partial\Omega, |\ |, 
  \sigma)$ and $(\partial\Omega, |\ |, \omega_0)$ are spaces of homogeneous type. M. Christ's construction (see \cite{c}) ensures that there exists a family of dyadic  cubes $\{Q_
  \alpha^k\subset\partial\Omega:k\in\mathbb{Z}, \alpha\in I_k\}$, $I_k\subset \mathbb{N}$ such that for every $k\in\mathbb{Z}$
\begin{equation}\label{tt7.19}
    \sigma(\partial\Omega\setminus\bigcup_{\alpha}Q_\alpha^k)=0, \ \ \ \ \omega_0(\partial\Omega\setminus\bigcup_{\alpha}Q_\alpha^k)=0.
\end{equation}
and the following properties are satisfied:
\begin{enumerate}
\item{} If $l\ge
 k$ then either $Q_\beta^l\subset Q_\alpha^k$ or $Q_\beta^l\cap Q_\alpha^k=\emptyset.$

\item{} For each $(k,\alpha)$ and each $l<k$ there is a unique $\beta$ so that $Q_\alpha^k\subset Q_\beta^l$.

\item{} There exists a constant $C_0>0$ such that ${\rm{diam}}\,Q_\alpha^k\le
 C_08^{-k}$.

\item{} Each $Q_\alpha^k$ contains a ball $B(Z_\alpha^k,8^{-k-1})$.
\end{enumerate}

The Ahlfors regularity property of $\sigma$ combined with properties 3 and 4 ensure that there exists $C_1>1$ such that
\begin{equation}\label{tt7.24}
    C_1^{-1}8^{-k(n-1)}\le
 \sigma(Q_\alpha^k)\le
 C_18^{-k(n-1)}.
\end{equation}
In addition the doubling property of $\omega_0$ yields
\begin{equation}\label{tt7.25}
    \omega_0(B(Z_\alpha^k,8^{-k-1}))\sim\omega_0(Q_\alpha^k).
\end{equation}
For $k\in\mathbb{Z}$ and $\alpha\in I_k$ we define
\begin{equation}\label{tt7.26}
    I_\alpha^k=\{Y\in\Omega: \lambda8^{-k-1}<\delta(Y)<\lambda8^{-k+1}, \ \exists P\in Q_\alpha^k\ \ {\rm{so \ that}}\ \ \lambda8^{-k-1}<|P-Y|<\lambda8^{-k+1}\},
\end{equation}
where $\lambda>0$ is chosen so that for each $k$, the $\{I_\alpha^k\}_{\alpha\in I_k}$'s have finite overlaps and
\begin{equation}\label{tt7.26A}
    \Omega\cap(\partial\Omega,4R_0)\subset \bigcup_{k\le
 k_0,\alpha}I_\alpha^k.
\end{equation}
Here $k_0$ can be selected so that $4R_0<\lambda 8^{-k_0-1}$. We refer the reader to \cite{MPT} for the proof of (\ref{tt7.26A}) and the details on the construction of $\{Q_\alpha^k\}$ and $\{I_\alpha^k\}$.

The various constants that will appear in the sequel may vary from formula to formula, although for simplicity we use the same letter. If we do not give
any explicit dependence for a constant, we mean that it depends
only on the ellipticity constants, CAD constants and the dimension.

\section{Main Result}\label{mainresult}

In this section we state and prove the main result of the present work. Assume that $L_0={\rm{div}}(A_0\nabla\ )$ and $L_1={\rm{div}}(A_1\nabla\ )$ are two symmetric divergence form operators 
operators satisfying (\ref{ellipticity}) defined in a CAD $\Omega$ containing 0. We denote the deviation function of $L_1$ from $L_0$ by 
$$a(X)=\sup\{|A_1(Y)-A_0(Y)|: Y\in B(X,\delta(X)/2)\}$$
and we assume that $L_1$ is a perturbation of $L_0$.
For $t\in [0,1]$ we consider the operators defined by 
\begin{eqnarray}\label{L_t}
L_tu &=& {\rm{div}}(A_t\nabla u ) \\
A_t(X) &=& (1-t)A_0(X)+tA_1(X).
\end{eqnarray}
Note that for each $t$, $L_t$ satisfies (\ref{ellipticity}).
Let $\omega_t$ be the corresponding $L_t-$elliptic measure with pole 0 and let $G_t(0,Y)$ be the Green's function for $L_t$. 
\begin{remark}\label{esch}
Note that since 
$$a_t(X)=\sup\{|A_t(Y)-A_0(Y)|: Y\in B(X,\delta(X)/2)\}= ta(X)$$
then $L_t$ is also a perturbation of $L_0$. Moreover under the assumptions of Corollary \ref{surface}, we have that for every $t\in[0,1]$ $\omega_t$ is a $B_{2(1+\delta_0)}(\sigma)$-weight with a 
uniform $B_{2(1+\delta_0)}$-constant. Thus in particular for $t\in[0,1]$, $\omega_t\in B_{2}(\sigma)$. From now on we assume that $\mathcal C(a)$ is small enough so that the hypothesis of 
Theorem \ref{mainthm1MPT} and those of Corollary \ref{surface} are satisfied (see Proposition \ref{p-fkp}).
\end{remark}

We consider the Dirichlet problems
\begin{equation}\label{mmDir1}
\left\{
  \begin{array}{ll}
    L_tu_t=0 & {\rm{in}} \ \Omega \\
    u_t =f & {\rm{on}}\  \partial\Omega
  \end{array}
\right. \hspace{3em}
\left\{
  \begin{array}{ll}
    L_su_s=0 & {\rm{in}} \ \Omega \\
    u_s =f & {\rm{on}}\  \partial\Omega
  \end{array}
\right.
\end{equation}
for $s,t\in [0,1]$, where $f\in L^2(\sigma)$.
\begin{lemma}\label{lemma3}
Let $\Omega$ be a CAD, $0\in\Omega$. Under the assumptions in Remark \ref{esch}, if $u_t$, $u_s$ are solutions to the Dirichlet problems (\ref{mmDir1}) then
\begin{equation}\label{lemma3-1}
u_s(0)-u_t(0)=(s-t)\int_\Omega \varepsilon(Y)\nabla G_t(0,Y)\nabla u_s(Y)dY
\end{equation}
and
\begin{equation}\label{lemma3-2}
\int_\Omega |\varepsilon(Y)||\nabla G_t(0,Y)||\nabla u_s(Y)|dY\lesssim ||f||_{L^2(\sigma)}.
\end{equation}
In particular
\begin{equation}\label{lemma3-3}
|u_s(0)-u_t(0)|\lesssim ||f||_{L^2(\sigma)}|s-t|.
\end{equation}
\end{lemma}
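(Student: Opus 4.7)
I would prove (\ref{lemma3-1}) by an integration by parts that uses the Green's function $G_t$, derive (\ref{lemma3-2}) by a Cauchy--Schwarz that isolates the Carleson hypothesis on the deviation $a$, and read off (\ref{lemma3-3}) from the two. For the identity, set $w = u_s - u_t$. Since $u_s = u_t = f$ on $\partial\Omega$, $w$ has zero boundary data, and the computation
\[
L_t w \;=\; L_t u_s - L_t u_t \;=\; L_t u_s - L_s u_s \;=\; \dv\bigl((A_t - A_s)\nabla u_s\bigr) \;=\; (t-s)\,\dv(\varepsilon\nabla u_s)
\]
holds weakly in $\Omega$, where $\varepsilon = A_1 - A_0$. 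Pairing this equation with $G_t(0,\cdot)$ and integrating by parts twice, using that both $G_t(0,\cdot)$ and $w$ vanish on $\partial\Omega$ and that $\varepsilon$ is symmetric, yields (\ref{lemma3-1}) (up to the sign convention for the Green's function). The usual technicalities from the singularity of $G_t$ at the pole $0$ and the $L^2$-boundary attainment of $u_s$ are handled by excising a small ball around $0$, approximating $f$ by smooth data, and passing to the limit.

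For (\ref{lemma3-2}), I apply Cauchy--Schwarz with weights $G_t^{1/2}/\delta$ and $\delta|\nabla G_t|/G_t^{1/2}$:
\begin{equation*}
\int_\Omega |\varepsilon|\,|\nabla G_t|\,|\nabla u_s|\,dY
\;\le\; \bigg(\int_\Omega a^2\,\frac{G_t(0,Y)}{\delta(Y)^2}\,dY\bigg)^{1/2}
\bigg(\int_\Omega \frac{\delta(Y)^2|\nabla G_t(0,Y)|^2}{G_t(0,Y)}\,|\nabla u_s|^2\,dY\bigg)^{1/2}.
\end{equation*}
Remark \ref{esch} ensures condition (\ref{condThm2.11}); since $G_t \sim G_0$ uniformly in $t$ by ellipticity and the comparison principle, a sum of local Carleson estimates over a tent cover of $\partial\Omega$ bounds the first factor by a constant. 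For the second, the interior Caccioppoli inequality (Lemma \ref{Cacciop}) applied to $G_t(0,\cdot)$ on Whitney balls $B(Y,\delta(Y)/4)$ gives $|\nabla G_t|^2 \lesssim G_t^2/\delta^2$, so the factor is controlled by $\bigl(\int_\Omega G_t|\nabla u_s|^2\,dY\bigr)^{1/2}$. The Whitney-type decomposition $\{I_\alpha^k\}$ combined with Lemma \ref{lem2.3} yields $G_t(0,Y) \sim \omega_t(Q_\alpha^k)\,\delta(Y)^{2-n}$ on $I_\alpha^k$, and a Fubini argument along non-tangential cones converts the interior energy into a boundary integral dominated by $\int_{\partial\Omega} S_M(u_s)^2\,d\omega_t$. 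Lemma \ref{1510}, with $\mu = \omega_t \in A_\infty(\omega_s)$ (all of $\omega_0,\omega_s,\omega_t$ lie in the same $A_\infty(\sigma)$ class uniformly in $t$ by Corollary \ref{surface}), replaces the square function by the nontangential maximal function, and the $L^2(\sigma)$-solvability of the Dirichlet problem for $L_s$ (equivalent to $\omega_s \in B_2(\sigma)$, again from Corollary \ref{surface}) together with the $A_\infty$ comparability of $\omega_t$ with $\sigma$ gives $\int_{\partial\Omega} N(u_s)^2\,d\omega_t \lesssim \|f\|_{L^2(\sigma)}^2$. Estimate (\ref{lemma3-3}) is then immediate from the triangle inequality applied to (\ref{lemma3-1}) and (\ref{lemma3-2}).

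The main obstacle is the chain from the interior weighted energy $\int_\Omega G_t\,|\nabla u_s|^2\,dY$ down to $\|f\|_{L^2(\sigma)}^2$. The CAD Whitney geometry and the Green-function/elliptic-measure comparison of Lemma \ref{lem2.3} translate the interior integral into a boundary square-function integral, but closing the loop with boundary datum in $L^2(\sigma)$ rather than $L^2(\omega_s)$ demands that $\omega_0,\omega_s,\omega_t$ all belong uniformly to the same $A_\infty(\sigma)$ class and that the $L^2(\sigma)$ Dirichlet problem for $L_s$ and $L_t$ be solvable, which is precisely what Corollary \ref{surface} and the uniform-in-$t$ application of Theorem \ref{mainthm1MPT} supply under the smallness hypothesis of Remark \ref{esch}.
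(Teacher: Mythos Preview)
Your derivation of (\ref{lemma3-1}) is correct and matches the paper's. The gap is in your route to (\ref{lemma3-2}). The global Cauchy--Schwarz you write produces the factor
\[
\int_\Omega \frac{\delta(Y)^2\,|\nabla G_t(0,Y)|^2}{G_t(0,Y)}\,|\nabla u_s(Y)|^2\,dY,
\]
and you then appeal to Caccioppoli to replace $|\nabla G_t|^2$ by $G_t^2/\delta^2$ pointwise. For divergence-form operators with merely bounded measurable coefficients no such pointwise gradient bound exists: Lemma~\ref{Cacciop} controls only the \emph{average} $\fint_I |\nabla G_t|^2 \lesssim \delta_I^{-2}\fint_{2I}G_t^2$ on a Whitney region $I$, and the factor $|\nabla u_s|^2$ sitting in the same integrand blocks you from substituting the averaged bound. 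Meyers-type higher integrability gives at most $|\nabla G_t|\in L^{2+\epsilon}_{\mathrm{loc}}$ for a small $\epsilon$, nowhere near enough to H\"older-pair against $|\nabla u_s|^2$.

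The paper instead performs Cauchy--Schwarz \emph{locally} on each dyadic shadow $I_\alpha^k$, so that $|\nabla G_t|$ and $|\nabla u_s|$ land in separate integrals; Caccioppoli then legitimately turns $\bigl(\int_{I_\alpha^k}|\nabla G_t|^2\bigr)^{1/2}$ into $\omega_t(Q_\alpha^k)\,(\mathrm{diam}\,Q_\alpha^k)^{1-n/2}$. (The paper also writes $|\nabla G_t|\lesssim G_t/\delta$ as shorthand, but its sums are arranged so that this is exactly the local $L^2$ Caccioppoli estimate.) A second consequence of this ordering is that the $\omega_t$-dependence gets packed entirely into the Carleson-type factor via $H_t(Y)=\omega_t(B(Q_Y,\delta(Y)))/\delta(Y)^{n-1}$, leaving the square-function factor as $\int S_M(u_s)^2\,d\sigma$ rather than $\int S_M(u_s)^2\,d\omega_t$; this is what lets the final appeal to $\omega_s\in B_2(\sigma)$ close directly. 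Your route would instead produce $\int_{\partial\Omega} S_M(u_s)^2\,d\omega_t$, and passing from there to $\|f\|_{L^2(\sigma)}^2$ does not follow from $A_\infty$ comparability alone: it would require $k_t\in L^\infty(\sigma)$ or $L^q(\sigma)$-solvability of $L_s$ for some $q>2$, neither of which the hypotheses supply.
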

\begin{proof} Assume that $\delta(0)=4R_0$. Without loss of generality we assume that $A_0=A_1$ on $B(0,R_0)$ and $s>t$. Then integration by parts shows that 
\begin{equation}\label{mm3.7}
u_s(0)-u_t(0)=\int_\Omega G_t(0,Y)L_tu_s(Y)dY=(s-t)\int_\Omega \varepsilon(Y)\nabla G_t(0,Y)\nabla u_s(Y)dY
\end{equation}
which proves (\ref{lemma3-1}). To prove (\ref{lemma3-2}) we proceed as in the proof of Lemma 7.7 in \cite{MPT} using the dyadic surface cubes and their interior \emph{shadows} described in 
Section \ref{prelims}. Assume that $\Omega\setminus (\Omega,R_0)\subset \bigcup_i B(Q_i,2R_0)\cap\Omega$ where $|Q_i-Q_j|\geq R_0$, $Q_i\in\partial \Omega$. Note that the family of balls has
 finite overlap. First we estimate the integral in the tent over $\Delta_0=B(Q_i,2R_0)\cap \partial\Omega$. 
$$\int_{B(Q_i,2R_0)\cap\Omega}|\varepsilon(Y)\nabla G_t(0,Y)\nabla u_s(Y)|dY=\lim_{\delta\rightarrow 0}\int_{T(\Delta_0)\setminus (\partial\Omega,\delta)}|\varepsilon(Y)\nabla G_t(0,Y)\nabla u_s(Y)|dY$$ 
where $T(\Delta_0)=B(Q_i,2R_0)\cap\Omega$. For $\delta>0$ small we compute
\begin{equation}\label{mmI_1}
I_1=\int_{T(\Delta_0)\setminus (\partial\Omega,\delta)}|\varepsilon(Y)\nabla G_t(0,Y)\nabla u_s(Y)|dY \leq \sum_{\substack{Q^k_\alpha\subset 3\Delta_0\\ \delta<\lambda 8^{-k-1}}}\sup_{I^k_\alpha}|\varepsilon(Y)|\int_{I^k_\alpha}|\nabla G_t(0.Y)||\nabla u_s(Y)|dY
\end{equation}
and for $Y\in I_\alpha^k$
$$|\nabla G_t(0.Y)|\lesssim \frac{G_t(0,Y)}{\delta(Y)}\sim\frac{\omega_t(Q_\alpha^k)}{({\rm{diam}}Q_\alpha^k)^{n-1}}$$
thus
\begin{eqnarray}\label{3.8A}
I_1 &\lesssim&\sum_{\substack{Q^k_\alpha\subset 3\Delta_0\\ \delta<\lambda 8^{-k-1}}}\sup_{I^k_\alpha}|\varepsilon(Y)|\frac{\omega_t(Q_\alpha^k)}{({\rm{diam}}Q_\alpha^k)^{n-1}}\int_{I_\alpha^k}|\nabla u_s(Y)|dY\\
&\lesssim & \sum_{\substack{Q^k_\alpha\subset 3\Delta_0\\ \delta<\lambda 8^{-k-1}}}\sup_{I^k_\alpha}|\varepsilon(Y)|\frac{\omega_t(Q_\alpha^k)}{({\rm{diam}}Q_\alpha^k)^{n-1}}\bigg(\int_{I_\alpha^k}|\nabla u_s(Y)|^2\delta(Y)^{2-n}dY\bigg)^{1/2}({\rm{diam}}Q_\alpha^k)^{n/2}({\rm{diam}}Q_\alpha^k)^{n/2-1}  \nonumber\\
&\lesssim & \sum_{\substack{Q^k_\alpha\subset 3\Delta_0\\ \delta<\lambda 8^{-k-1}}}\sup_{I^k_\alpha}|\varepsilon(Y)|\frac{\omega_t(Q_\alpha^k)}{({\rm{diam}}Q_\alpha^k)^{n-1}}\bigg(\int_{I_\alpha^k}|\nabla u_s(Y)|^2\delta(Y)^{2-n}dY\bigg)^{1/2}({\rm{diam}}Q_\alpha^k)^{n-1} \nonumber\\
&\lesssim & \sum_{\substack{Q^k_\alpha\subset 3\Delta_0\\ \delta<\lambda 8^{-k-1}}}\bigg(\int_{I_\alpha^k}\frac{a^2(Y)}{\delta(Y)^n}\cdot\frac{\omega_t(Q_\alpha^k)^2}{({\rm{diam}}Q_\alpha^k)^{2n-2}}dY\bigg)^{1/2}\bigg(\int_{I_\alpha^k}|\nabla u_s(Y)|^2\delta(Y)^{2-n}dY\bigg)^{1/2}({\rm{diam}}Q_\alpha^k)^{n-1} \nonumber\\
&\lesssim & \int_{3\Delta_0}\bigg(\sum\int_{I_\alpha^k}\frac{a^2(Y)}{\delta(Y)^n}\cdot\frac{\omega_t(B(Q_Y,\delta(Y)))^2}{\delta(Y)^{2n-2}}dY\chi_{Q_\alpha^k(Q)}\bigg)^{1/2}
  \bigg(\sum\int_{I_\alpha^k}|\nabla u_s(Y)|^2\delta(Y)^{2-n}dY\chi_{Q_\alpha^k(Q)}\bigg)^{1/2}d\sigma\nonumber\\
  &\lesssim & \int_{3\Delta_0} \bigg(\int_{\Gamma_M(Q)}\frac{a^2(Y)}{\delta(Y)^n}H_t(Y)^2dY\bigg)^{1/2}S_M(u_s)d\sigma(Q)\nonumber\\
   &\lesssim &\bigg(\int_{3\Delta_0}\int_{\Gamma_M(Q)}\frac{a^2(Y)}{\delta(Y)^n}H_t(Y)^2d\sigma\bigg)^{1/2}\bigg(\int_{3\Delta_0}S_M(u_s)^2d\sigma\bigg)^{1/2}\nonumber
\end{eqnarray}
where $H_t(Y)=\frac{\omega_t(B(Q_Y,\delta(Y)))}{\delta(Y)^{n-1}}$. Since $\omega_s\in B_2(\s)$ by Remark \ref{esch} applying Lemma \ref{1510} for $p=2$ and recalling that the $L^2(\sigma)$ norm of the non-tangential maximal function of $u_s$  is bounded by the $L^2(\sigma)$ norm of $f$ we obtain
\begin{equation}\label{mmI_1a}
I_1\lesssim \bigg(\int_{3\Delta_0}\int_{\Gamma_M(Q)}\frac{a^2(Y)}{\delta(Y)^n}H_t(Y)^2d\sigma\bigg)^{1/2}||f||_{L^2(\sigma)} :=D ||f||_{L^2(\sigma)}.
\end{equation}
We now estimate $D$:
\begin{eqnarray}
D^2 &=& \int_{3\Delta_0}\int_{\Gamma_M(Q)}\frac{a^2(Y)}{\delta(Y)^n}H_t(Y)^2dYd\sigma=\int_{3\Delta_0}\int_{\Omega}\chi_{\Gamma_M(Q)}(Y)\frac{a^2(Y)}{\delta(Y)^n}H_t(Y)^2dYd\sigma\nonumber\\
&\lesssim& \int_{12\Delta_0}\bigg(\int_{\Gamma_M(Q)}\frac{a^2(Y)}{\delta(Y)^n}H_t(Y)dY\bigg)d\omega_t\nonumber\\
&\lesssim& \int_{12\Delta_0}NH_t(Q)A(a)(Q)^2d\omega_t\nonumber\\
&\lesssim& \bigg(\int_{12\Delta_0}NH_t(Q)^pd\omega_t\bigg)^{1/p}\bigg(\int_{12\Delta_0}A(a)(Q)^{2q}d\omega_t\bigg)^{1/q}\nonumber\\
\end{eqnarray}
where $\frac{1}{p}+\frac{1}{q}=1$, $p=1+\delta_0$ and $\delta_0$ is selected as in Corollary \ref{surface}. Note also that
$$\bigg(\int_{12\Delta_0}A(a)(Q)^{2q} k_td\sigma\bigg)^{1/q}\leq \bigg(\int_{12\Delta_0}A(a)(Q)^{4q}d\sigma\bigg)^{1/2q}\bigg(\int_{12\Delta_0}k_t^2d\sigma\bigg)^{1/2q}$$
where
$$\bigg(\int_{12\Delta_0}A(a)(Q)^{4q}d\sigma\bigg)^{1/2q}\lesssim \mathcal{C}(a)^2\sigma(12\Delta_0)^{1/2q}$$
and $\mathcal{C}(a)$ is defined in (\ref{tt-carleson}).
Therefore, if $Mk_t$ denotes the maximal function
\begin{eqnarray}\label{mmD}
D^2 &\lesssim& \bigg(\int_{12\Delta_0}k_t^2d\sigma\bigg)^{1/2p}\bigg(\int_{12\Delta_0}Mk_t^{2p}d\sigma\bigg)^{1/2p} \mathcal{C}(a)\sigma(12\Delta_0)^{1/2q}\bigg(\int_{12\Delta_0}k_t^2d\sigma\bigg)^{1/2q}\nonumber\\
&\lesssim&\mathcal{C}(a)\bigg(\fint_{12\Delta_0}k_t^2d\sigma\bigg)^{1/2} \bigg(\fint_{24\Delta_0}k_t^{2p}d\sigma\bigg)^{1/2p}\sigma(12\Delta_0)^{1/2q}\sigma(24\Delta_0)^{1/2+1/2p}\nonumber\\
&\lesssim& \mathcal{C}(a)\bigg(\fint_{12\Delta_0}k_td\sigma\bigg)\bigg(\fint_{12\Delta_0}k_td\sigma\bigg) \sigma(12\Delta_0)\nonumber\\
&\lesssim& \mathcal{C}(a)^2\frac{\omega_t(12\Delta_0)^2}{\sigma(12\Delta_0)}.
\end{eqnarray}

Note that the estimate for $I_1$ is independent of $\delta$ and therefore we conclude combining (\ref{mmI_1}), (\ref{mmI_1a}), (\ref{mmD}) that
\begin{eqnarray}\label{tent}
\int_{B(Q_i,2R_0)\cap\Omega}|\varepsilon(Y)\nabla G_t(0,Y)\nabla u_s(Y)|dY&\lesssim &\mathcal{C}(a)\frac{\omega_t(12\Delta_0)}{\sqrt{\sigma(12\Delta_0)}}\|f\|_{L^2(\sigma)}\nonumber\\
\lesssim \|f\|_{L^2(\sigma)}.
\end{eqnarray}

We now estimate the integral over the complement of the tent in $\Omega$.

\begin{eqnarray}\label{mmI_2}
I_2&=&\int_{\Omega \setminus T(\Delta_0)}|\varepsilon(Y)\nabla G_t(0,Y)\nabla u_s(Y)|dY
\lesssim \int_{\partial\Omega\setminus\frac{1}{4}\Delta_0}S_M(u_s)d\omega_t\\
%&=& \int_{\partial\Omega\setminus\frac{1}{4}\Delta_0}S_M(u_s)k_td\sigma\nonumber\\
&\lesssim& \bigg(\int_{\partial\Omega}S_M(u_s)^2d\sigma\bigg)^{1/2}\bigg(\int_{\partial\Omega}k_t^2d\sigma\bigg)^{1/2}
\lesssim ||f||_{L^2(\sigma)}.\nonumber
\end{eqnarray}
Since $\Omega$ is a bounded domain and $\omega_t\in B_2(\sigma)$ then the $L^2(\sigma)$ norm of $k_t$ is bounded in terms of  the $B_2(\sigma)$ norm of $\omega_t$ and $R_0$.
The proof is completed by combining (\ref{tent}) and (\ref{mmI_2}).   
\end{proof}

Let $Q_0\in\partial \Omega$ be fixed and $r>0$. Let $\Delta_r=\Delta_r(Q_0)=\partial\Omega \cap B(Q_0,r)$ and $T(\Delta_r)=T(\Delta_r(Q_0))=\Omega \cap B(Q_0,r)$.

\begin{lemma}\label{lemma4}
Under the assumptions in Remark \ref{esch},  for $f\in L^2(\sigma)$ and  $t\in[0,1]$ consider
$$\Psi(t)=\frac{1}{\omega_t(\Delta_r)}\int_{\Delta_r}fk_td\sigma.$$
Then $\Psi(t)$ is Lipschitz. Moreover
$$\dot{\Psi}(t)=\frac{1}{\omega_t(\Delta_r)}\int_{\Delta_r}\dot{k}_t\bigg(f-\fint_{\Delta_r}fd\omega_t\bigg)d\sigma$$
where $\dot{k}_t$ exists as the weak $L^2$ limit of $k_{t+h}-k_t/h$ as $h$ tends to zero.
\end{lemma}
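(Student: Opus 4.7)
The starting observation is that the numerator and denominator of $\Psi(t)$ are values of $L_t$-harmonic extensions evaluated at $0$. Let $u_t$ and $v_t$ denote the $L_t$-solutions with boundary data $f\chi_{\Delta_r}$ and $\chi_{\Delta_r}$ respectively, both well-defined since by Remark \ref{esch} we have $\omega_t\in B_2(\sigma)$ uniformly in $t$. Then
\begin{equation*}
u_t(0)=\int_{\Delta_r}fk_t\,d\sigma,\qquad v_t(0)=\omega_t(\Delta_r),
\end{equation*}
so $\Psi(t)=u_t(0)/v_t(0)$. Each boundary datum lies in $L^2(\sigma)$, so Lemma \ref{lemma3} shows both $t\mapsto u_t(0)$ and $t\mapsto v_t(0)$ are Lipschitz on $[0,1]$, and Lemma \ref{lem2.3} combined with the interior Harnack inequality (with constants uniform in $t$ because the ellipticity constants are) gives a positive lower bound $v_t(0)\gtrsim 1$ independent of $t\in[0,1]$. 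Dividing shows $\Psi$ is Lipschitz.

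The heart of the matter is to produce $\dot k_t$ as a weak $L^2(\sigma)$ limit. For arbitrary $g\in L^2(\sigma)$ let $u_s^{(g)}$ solve $L_s u_s^{(g)}=0$ with boundary data $g$. Identity (\ref{lemma3-1}) gives
\begin{equation*}
\int_{\partial\Omega}g\,\frac{k_{t+h}-k_t}{h}\,d\sigma=\frac{u_{t+h}^{(g)}(0)-u_t^{(g)}(0)}{h}=\int_{\Omega}\varepsilon(Y)\,\nabla G_t(0,Y)\,\nabla u_{t+h}^{(g)}(Y)\,dY,
\end{equation*}
and by (\ref{lemma3-2}) the right-hand side is bounded by $C\|g\|_{L^2(\sigma)}$ uniformly in $h$. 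Taking the supremum over $\|g\|_{L^2(\sigma)}\le 1$ shows the difference quotients $(k_{t+h}-k_t)/h$ are uniformly bounded in $L^2(\sigma)$. Combining the continuous dependence $u_{t+h}^{(g)}\to u_t^{(g)}$ in $W^{1,2}_{\mathrm{loc}}(\Omega)$ with the tent decomposition and Carleson estimates used in the proof of Lemma \ref{lemma3} to dominate the integrand uniformly in $h$, one passes to the limit and obtains
\begin{equation*}
\lim_{h\to 0}\int_{\partial\Omega}g\,\frac{k_{t+h}-k_t}{h}\,d\sigma=\int_{\Omega}\varepsilon(Y)\,\nabla G_t(0,Y)\,\nabla u_t^{(g)}(Y)\,dY.
\end{equation*}
The limit is a bounded linear functional of $g$ on $L^2(\sigma)$, so by Riesz representation it is given by integration against a function $\dot k_t\in L^2(\sigma)$, which is the desired weak $L^2(\sigma)$ limit.

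Once $\dot k_t$ is in hand, choosing $g=f\chi_{\Delta_r}$ gives $\dot u_t(0)=\int_{\Delta_r}f\,\dot k_t\,d\sigma$ and $g=\chi_{\Delta_r}$ gives $\dot v_t(0)=\int_{\Delta_r}\dot k_t\,d\sigma$. The quotient rule applied to $\Psi(t)=u_t(0)/v_t(0)$ then yields
\begin{equation*}
\dot\Psi(t)=\frac{\dot u_t(0)}{v_t(0)}-\frac{u_t(0)\,\dot v_t(0)}{v_t(0)^2}=\frac{1}{\omega_t(\Delta_r)}\int_{\Delta_r}\dot k_t\left(f-\fint_{\Delta_r}f\,d\omega_t\right)d\sigma
\end{equation*}
after substituting and grouping the two terms. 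The main technical obstacle is the passage to the limit in $\int_\Omega\varepsilon\,\nabla G_t(0,\cdot)\,\nabla u_{t+h}^{(g)}\,dY$, because $\nabla G_t$ is singular on $\partial\Omega$ and $\nabla u_{t+h}^{(g)}$ is not known to converge strongly globally. The cure is to reuse the decomposition from the proof of Lemma \ref{lemma3}: on compact subsets of $\Omega$, standard interior estimates yield strong $W^{1,2}$ convergence, while near $\partial\Omega$ the Carleson smallness of $a$ combined with the square-function and non-tangential-maximal-function bounds of the type that produce (\ref{mmD}) provide the uniform integrability required for dominated convergence.
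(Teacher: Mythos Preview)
Your argument is correct and mirrors the paper's: both reduce to Lemma \ref{lemma3} for the Lipschitz bound on $t\mapsto u_t(0)$ and for the uniform $L^2(\sigma)$ control of $(k_{t+h}-k_t)/h$, and then differentiate the quotient $u_t(0)/\omega_t(\Delta_r)$. You are in fact more careful than the paper on two points---the uniform lower bound on $\omega_t(\Delta_r)$ via Harnack, and the upgrade from mere $L^2$-boundedness of the difference quotients to an actual weak limit by passing to the limit in the integral identity---but the underlying route is the same.
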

\begin{proof} Let $s,t\in[0,1]$ and denote by $u_t$, $u_s$ the solutions of (\ref{mmDir1}) for a given $f\in L^{2}(\sigma)$. Then 
\begin{eqnarray}\label{PsiLip}
\Psi(t)-\Psi(s) &=&\frac{1}{\omega_t(\Delta_r)}\int_{\Delta_r}k_tfd\sigma-\frac{1}{\omega_s(\Delta_r)}\int_{\Delta_r}k_sfd\sigma\nonumber\\ 
&=& \frac{1}{\omega_t(\Delta_r)}\bigg(\int_{\Delta_r}k_tfd\sigma-\int_{\Delta_r}k_sfd\sigma\bigg)+\bigg(\frac{1}{\omega_t(\Delta_r)}-\frac{1}{\omega_s(\Delta_r)}\bigg)\int_{\Delta_r}k_sfd\sigma\nonumber
\end{eqnarray}
which will show that $\Psi$ is Lipschitz due to (\ref{lemma3-3}). In particular we have that
$$\bigg|\frac{u_{t+h}-u_t}{h}\bigg|=\bigg|\int_{\partial \Omega}\frac{k_{t+h}-k_t}{h}fd\sigma\bigg|\lesssim ||f||_{L^2(\sigma)}$$
which shows that $\dot{k}_t$ exists as a weak $L^2$ limit of $k_{t+h}-k_t/h$ as $h$ tends to zero. In order to compute $\dot{\Psi}(t)$ we write
\begin{eqnarray}\label{Psidot}
\frac{\Psi(t+h)-\Psi(t)}{h}&=&\frac{1}{\omega_{t+h}(\Delta_r)}\int_{\Delta_r}\frac{k_{t+h}-k_t}{h}fd\sigma+\frac{1}{h}\bigg(\frac{1}{\omega_{t+h}(\Delta_r)}-\frac{1}{\omega_{t}(\Delta_r)}\int_{\Delta_r}k_tfd\sigma\bigg)\nonumber\\ 
&=& \frac{1}{\omega_{t+h}(\Delta_r)}\int_{\Delta_r}\frac{k_{t+h}-k_t}{h}fd\sigma+\frac{\omega_{t}(\Delta_r)-\omega_{t+h}(\Delta_r)}{h}\frac{1}{\omega_{t+h}(\Delta_r)\omega_{t}(\Delta_r)}\int_{\Delta_r}k_tfd\sigma\nonumber
\end{eqnarray}
thus
$$\dot{\Psi}(t)=\frac{1}{\omega_t(\Delta_r)}\int_{\Delta_r}\dot{k}_t\bigg(f-\fint_{\Delta_r}fd\omega_t\bigg)d\sigma.$$
\end{proof}
\begin{lemma}\label{lemma5}
Under the assumptions in Remark \ref{esch}, there exist positive constants $\beta$, $\gamma<1$ and $C$ such that if $\Psi(t)$ is the function defined in Lemma \ref{lemma4} where $f$ is a 
non-negative function with $spt(f)\subset \Delta_r(Q_0)$ and $||f||_{L^2(d\sigma /\sigma(\Delta(Q_0,r)))}\leq 1$ then
\begin{equation}\label{estdotpsi}
|\dot{\Psi}(t)|\leq C\bigg[r^\gamma+\sup_{\substack{Q\in\partial \Omega\\ s\leq r^\beta}}\bigg(\frac{1}{\sigma(\Delta_s(Q))}\int_{T(\Delta_s(Q))}\frac{a(Y)^2}{\delta(Y)}dY\bigg)^{1/2}\bigg]
\end{equation} 
for $0\leq t\leq 1$.
\end{lemma}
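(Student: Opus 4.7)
The plan is to recognize $\dot\Psi(t)$ as the derivative at $s=t$ of a solution value with a carefully chosen boundary datum, and then adapt the tent-integral argument that drove the proof of Lemma~\ref{lemma3}. Set $c_t=\fint_{\Delta_r}f\,d\omega_t$ and $g=(f-c_t)\chi_{\Delta_r}$. Using the uniform $B_2(\sigma)$ bound on $\omega_t$ (Remark~\ref{esch}), one readily checks that $|c_t|\lesssim 1$, $\|g\|_{L^2(\sigma)}\lesssim\sigma(\Delta_r)^{1/2}$, and $\int g\,d\omega_t=0$. Let $v_t$ solve $L_tv_t=0$ in $\Omega$ with $v_t=g$ on $\partial\Omega$. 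Since $v_t(0)=\int g\,k_t\,d\sigma$, dividing (\ref{lemma3-1}) by $s-t$, letting $s\to t$, and comparing with the formula in Lemma~\ref{lemma4} yields
$$\dot\Psi(t)=\frac{1}{\omega_t(\Delta_r)}\int_\Omega \varepsilon(Y)\,\nabla G_t(0,Y)\cdot\nabla v_t(Y)\,dY.$$

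Fix $\beta\in(0,1)$ to be chosen, set $\Delta^*=\Delta_{r^\beta}(Q_0)$ and $T^*=T(\Delta^*)$, and split $\Omega=T^*\cup(\Omega\setminus T^*)$. For the integral over $T^*$ I would repeat the chain (\ref{mmI_1})--(\ref{mmD}) verbatim, with $(\Delta_0,u_s)$ replaced by $(\Delta^*,v_t)$. All dyadic cubes $Q_\alpha^k$ appearing in the decomposition have diameter $\lesssim r^\beta$, so the Carleson factor that emerges is precisely the supremum on the right of (\ref{estdotpsi}). Combined with $\|g\|_{L^2(\sigma)}\lesssim\sigma(\Delta_r)^{1/2}$, the $B_2(\sigma)$ property of $\omega_t$, and the doubling of $\omega_t$ and $\sigma$, the local contribution is bounded, after dividing by $\omega_t(\Delta_r)$, by the Carleson term in (\ref{estdotpsi}).

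The far piece is the heart of the proof. The zero $\omega_t$-mean of $g$ lets one write, for $Y$ with $|Y-Q_0|\geq r^\beta$,
$$v_t(Y)=\int_{\Delta_r}g(Q)\!\left[\frac{k_t^Y(Q)}{k_t^0(Q)}-\frac{k_t^Y(Q_0)}{k_t^0(Q_0)}\right]\! k_t^0(Q)\,d\sigma(Q).$$
Applying Lemma~\ref{lem2.6} to the ratio $G_t(\cdot,Y)/G_t(\cdot,0)$, and transferring to boundary kernels via Lemma~\ref{lem2.3}, gives
$$\left|\frac{k_t^Y(Q)}{k_t^0(Q)}-\frac{k_t^Y(Q_0)}{k_t^0(Q_0)}\right|\lesssim\left(\frac{r}{|Y-Q_0|}\right)^{\!\vartheta}\frac{\omega_t^Y(\Delta_r)}{\omega_t(\Delta_r)},\qquad Q\in\Delta_r.$$
Together with the crude bound $\int|g|\,d\omega_t\lesssim\omega_t(\Delta_r)$ (from Cauchy--Schwarz and $B_2$), this yields the pointwise decay $|v_t(Y)|\lesssim(r/|Y-Q_0|)^\vartheta\,\omega_t^Y(\Delta_r)$. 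Passing to $|\nabla v_t|$ by an interior Caccioppoli estimate, combining with $|\nabla G_t(0,Y)|\lesssim G_t(0,Y)/\delta(Y)$, and using Lemma~\ref{lem2.3} to rewrite $G_t(0,Y)\,\omega_t^Y(\Delta_r)$ in a manageable form, one bounds the far integral by $r^{\vartheta(1-\beta)}\omega_t(\Delta_r)$ after summing over the dyadic annuli centered at $Q_0$. Choosing $\beta\in(0,1)$ so the Carleson sup may be taken over scales $\leq r^\beta$ and setting $\gamma=\vartheta(1-\beta)$ completes the argument.

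The principal obstacle is precisely the far-field bookkeeping: converting the pointwise H\"older cancellation into a summable $L^1$-estimate on $|\varepsilon\nabla G_t\cdot\nabla v_t|$ over a region where the coefficient $a$ carries no smallness. On a Lipschitz domain this step leans on the graph structure, but here it must be executed through purely metric/NTA tools, namely the dyadic-shadow decomposition of \cite{MPT}, the Green function/kernel comparability from Lemma~\ref{lem2.3}, and the H\"older regularity of Lemma~\ref{lem2.6}. A delicate side-check is that choosing $c_t$ as the $\omega_t$-average (not the $\sigma$-average) is essential for the cancellation to be compatible with Lemma~\ref{lem2.6} applied to the ratio of Green functions.
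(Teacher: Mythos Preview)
Your overall architecture matches the paper's: express $\dot\Psi(t)$ as the perturbation integral with boundary datum $g=(f-c_t)\chi_{\Delta_r}$, treat a near region by the Carleson chain from Lemma~\ref{lemma3}, and treat the remainder via the kernel H\"older decay of Lemma~\ref{lem2.6} plus a dyadic-annulus summation. The far-field sketch you give is essentially the paper's Claims~2 and~3 collapsed together, and the computation does close.

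There is, however, a genuine gap in your treatment of the \emph{local} piece. You propose to run the chain (\ref{mmI_1})--(\ref{mmD}) verbatim on the tent $T^*=T(\Delta_{r^\beta})$. That chain produces (cf.\ (\ref{tent}))
\[
\int_{T^*}|\varepsilon||\nabla G_t||\nabla v_t|\,dY\ \lesssim\ \mathcal{C}^{\rm loc}(a)\,\frac{\omega_t(\Delta_{r^\beta})}{\sqrt{\sigma(\Delta_{r^\beta})}}\,\|g\|_{L^2(\sigma)},
\]
because the factor $D$ in (\ref{mmD}) scales with the tent, not with the support of the data. After inserting $\|g\|_{L^2(\sigma)}\lesssim\sigma(\Delta_r)^{1/2}$ and dividing by $\omega_t(\Delta_r)$ you are left with
\[
\mathcal{C}^{\rm loc}(a)\cdot\frac{\omega_t(\Delta_{r^\beta})}{\omega_t(\Delta_r)}\sqrt{\frac{\sigma(\Delta_r)}{\sigma(\Delta_{r^\beta})}}.
\]
Doubling and $B_2(\sigma)$ do \emph{not} control this ratio, since $r^\beta\gg r$: already for $\omega_t=\sigma$ it equals $\sqrt{\sigma(\Delta_{r^\beta})/\sigma(\Delta_r)}\sim r^{-(1-\beta)(n-1)/2}\to\infty$. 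So the two-region split at scale $r^\beta$ cannot close the local estimate as stated.

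This is precisely why the paper uses a three-region split
\[
\Omega=T(\Delta_{Mr})\cup\big(T(\Delta_{r^\beta})\setminus T(\Delta_{Mr})\big)\cup\big(\Omega\setminus T(\Delta_{r^\beta})\big),
\]
with $M$ a fixed constant. Claim~1 runs the Carleson chain on $T(\Delta_{Mr})$, where the tent and the support of $h_t$ are comparable, so doubling legitimately kills the ratio. The intermediate annulus $T(\Delta_{r^\beta})\setminus T(\Delta_{Mr})$ is \emph{not} handled by the square-function machinery at all; instead Claim~3 decomposes it into dyadic shells $T(\Delta_{8^jr})\setminus T(\Delta_{8^{j-1}r})$ and, in each shell, uses the pointwise smallness of $u_t$ (from boundary H\"older and the comparison $\omega_t^Z(\Delta_r)/\omega_t(\Delta_r)\sim 1/\omega_t(\Delta_{8^jr})$) together with Whitney-box Caccioppoli for both $u_t$ and $G_t$. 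This yields a geometric series in $j$ whose Carleson factor only sees scales $\le r^\beta$. Your far-field annulus argument is exactly this mechanism; the fix is to apply it also to the region between $Mr$ and $r^\beta$, and reserve the Lemma~\ref{lemma3} chain for $T(\Delta_{Mr})$ alone.
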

\begin{proof} Assume that $u_t$ is the solution of the problem 
\begin{equation}
\left\{
  \begin{array}{ll}
    L_tu_t=0 & {\rm{in}} \ \Omega \\
    u_t =h_t & {\rm{on}}\  \partial\Omega
  \end{array}
\right. 
\end{equation}
where $$h_t=\frac{1}{\omega_t(\Delta_r)}(f-\fint_{\Delta_r}fd\omega_t)\chi_{\Delta_r}$$
and $\Delta_r=\Delta_r(Q)$. As in (\ref{lemma3-1}) and Lemma \ref{lemma4} we have that 
$$\dot{\Psi}(t)\leq \int_\Omega |\varepsilon(Y)||\nabla_Y G_t||\nabla u_t|dY.$$
We prove Lemma \ref{lemma5} using the following three claims.

\textit{Claim 1. For fixed $Q_0\in \partial \Omega$, $r>0$ and $\Delta_{Mr}=\Delta_{Mr}(Q_0)$, $M>0$
\begin{equation}\label{claim1}
\int_{T(\Delta_{Mr})} |\varepsilon(Y)||\nabla_Y G_t||\nabla u_t|dY\lesssim \sup_{\substack{Q\in\partial \Omega\\ s\leq r^\beta}}\bigg(\frac{1}{\sigma(\Delta_s(Q))}\int_{T(\Delta_s(Q))}\frac{a(Y)^2}{\delta(Y)}dY\bigg)^{1/2}
\end{equation}
where $\beta$ is a given positive constant.}

\textit{Proof of Claim 1.} To prove (\ref{claim1}) we proceed as in the proof of (\ref{3.8A}) in Lemma \ref{lemma3}. In a similar manner we obtain the analog of (\ref{mmI_1a})-(\ref{tent}) which in this case yield
$$\int_{T(\Delta_{Mr})} |\varepsilon(Y)||\nabla_Y G_t||\nabla u_t|dY \lesssim \mathcal{C}(a)\frac{\omega_t(\Delta_r)}{\sqrt{\sigma(\Delta_r)}}||h_t||_{L^2}\lesssim \sup_{\substack{Q\in\partial \Omega\\ s\leq r^\beta}}\bigg(\frac{1}{\sigma(\Delta_s(Q))}\int_{T(\Delta_s(Q))}\frac{a(Y)^2}{\delta(Y)}dY\bigg)^{1/2}$$ 
due to the selection of the boundary data $h_t$.

\textit{Claim 2. Let $0<r<8R_0$ and $R_0$ is selected as in Lemma \ref{lemma3}. For fixed $Q_0\in\partial\Omega$,  there exists a constant $\eta>0$ such that
\begin{equation}\label{claim2}
\int_{\Omega\setminus T(\Delta_{8R_0})\cap (\partial\Omega,4R_0)} |\varepsilon(Y)||\nabla_Y G_t||\nabla u_t|dY\lesssim r^\eta
\end{equation}
where $\Delta_{8R_0}=\Delta_{8R_0}(Q_0)$.}

\textit{Proof of Claim 2.} Note that $\varepsilon(Y)\equiv 0$ in $B(0,\frac{\delta(0)}{4})$ where $G_t(0,-)$ denotes the Green's function of $L_t$ with pole at $0$.  We denote by $\Gamma_{R_0}=\Omega\setminus T(\Delta_{8R_0})\cap (\partial\Omega,4R_0)$ and apply Schwartz's inequality to obtain  
\begin{equation}\label{Claim2-a}
\int_{\Gamma_{R_0}} |\varepsilon(Y)||\nabla_Y G_t||\nabla u_t|dY \lesssim \sup_{(\partial\Omega,4R_0)}|\varepsilon(Y)|\bigg(\int_{\Gamma_{R_0}}|\nabla u_t|^2 dY\bigg)^{1/2}\bigg(\int_{(\partial\Omega,4R_0)}|\nabla_Y G_t|^2dY\bigg)^{1/2}.
\end{equation}
In addition,
\begin{equation}\label{mmG_t}
\bigg(\int_{(\partial\Omega,4R_0)}|\nabla_Y G_t|^2dY\bigg)^{1/2}\lesssim R_0^{n-2/2}G_t(0,A_{R_0}(Q_0))\lesssim  R_0^{\frac{n-2}{2}}\frac{\omega^{}(\Delta_{R_0}(Q_0))}{R_0^{n-2}}\lesssim R_0^{-\frac{n-2}{2}}
\end{equation}
where $A_{R_0}(Q_0)=(1-R_0)Q_0$. We will now estimate $\sup|u_t|$. Note that 
$$u_t=\int_{\partial\Omega}h_td\omega_t^X=\int_{\partial\Omega}h_tK_t(X,Q)d\omega_t$$
where $K_t(X,Q)=\frac{d\omega_t^X}{d\omega_t}(Q)$ and $d\omega_t=k_td\sigma$. For $\frac{1}{4}|X-Q_0|>|Q-Q_0|$ we have
\begin{eqnarray}\label{claim3-1-null}
|K_t(X,Q)-K_t(X,Q_0)| &\lesssim&  \bigg(\frac{|Q-Q_0|}{|X-Q_0|}\bigg)^\eta\frac{G_t(X,A_{|X-Q_0|}(Q_0))}{G_t(0,A_{|X-Q_0|}(Q_0))}\nonumber\\ 
&\lesssim&  \bigg(\frac{|Q-Q_0|}{|X-Q_0|}\bigg)^\eta\frac{\omega_t^X(\Delta_{|X-Q_0|}(Q_0))}{\omega_t(\Delta_{|X-Q_0|}(Q_0))}\nonumber\\ 
&\lesssim&  \bigg(\frac{|Q-Q_0|}{|X-Q_0|}\bigg)^\eta\frac{1}{\omega_t(\Delta_{|X-Q_0|}(Q_0))}\nonumber 
\end{eqnarray}
By its definition $h_t=0$ outside $\Delta_r$ therefore, 
$$u_t(X)=\int_{\partial\Omega}(K_t(X,Q)-K_t(X,Q_0))h_tk_td\sigma$$
and
\begin{equation}\label{star}
|u_t(X)|\lesssim \bigg(\frac{r}{|X-Q_0|}\bigg)^\eta\frac{1}{\omega_t(\Delta_{|X-Q_0|}(Q_0))}
\end{equation}
for some $\eta>0$.
We cover $\Gamma_{R_0}$ by balls of radius $8R_0$ such that the balls of radius $2R_0$ are disjoint and do not intersect with $T(\Delta_{2R_0})$. Using Cacciopoli's inequality, the maximum principle and (\ref{star}), we have
\begin{equation}\label{Claim2-bA}
\bigg(\int_{B(Q_l,2R_0)}|\nabla u_t|^2 dY\bigg)^{1/2} \lesssim R_0^{-1+n/2}\sup_{B(Q_l,2R_0)} \lesssim R_0^{-1+n/2}\bigg(\frac{r}{R_0}\bigg)^\eta \frac{1}{\omega_t(\Delta_{R_0}(Q_0))},
\end{equation} 
which shows that
\begin{equation}\label{Claim2-b}
\bigg(\int_{\Gamma_{R_0}}|\nabla u_t|^2 dY\bigg)^{1/2} \lesssim r^\eta
\end{equation}
with constant depending also on $\text{diam}\Omega$ and $R_0$. The claim follows by combining (\ref{Claim2-a}), (\ref{mmG_t}) and (\ref{Claim2-b}).

%By Lemma \ref{lem2.6} and the regularity of $k_t$ we have that for $Y\in T(\Delta_{8r^\beta})\setminus T(\Delta_{r^\beta/8})$ there exists an $\vartheta>0$ such that
%\begin{equation}\label{utM}
%|u_t(Y)|\lesssim \bigg(\frac{r}{|Y-Q_0|}\bigg)^{\vartheta}\frac{1}{\omega_t(\Delta_{|Y-Q_0|})}.
%\end{equation}
%On the other hand $\omega_t\in B_2(d\sigma)$ with constant independent of $t$, thus there exists $\rho>0$ such that $r^\rho\lesssim \omega_t(\Delta_r)$ for $0<r<1$ and
%\begin{equation}\label{supu_t}
%\sup_{T(\Delta_{8r^\beta})\setminus T(\Delta_{r^\beta/8})}|u_t(Y)|\lesssim (r^{1-\beta})^{\alpha}\frac{1}{\omega_t(\Delta_{r^\beta})} \lesssim r^{\alpha-\alpha\beta-\rho\beta}
%\end{equation} 
%due to the doubling property of $\omega_t$. Using (\ref{Claim2-a}), (\ref{Claim2-b}) and (\ref{supu_t}) we obtain
%$$\int_{\Omega\setminus T(\Delta_{r^\beta})} |\varepsilon(Y)||\nabla_Y G_t||\nabla u_t|dY \lesssim r^{\alpha-\beta(1-\frac{n}{2}+\alpha+\rho)}\lesssim r^\gamma$$
%provided that $\beta$ is small enough.

\textit{Claim 3. For fixed $Q_0\in \partial \Omega$, let $0<r<R<8R_0<\frac{\delta(0)}{4}$. Then 
\begin{equation}\label{claim3}
\int_{T(\Delta_R)\setminus T(\Delta_r)} |\varepsilon(Y)||\nabla_Y G_t||\nabla u_t|dY\leq \sum_{j=1}^L\bigg(\int_{T(\Delta_{8^{j}r})\setminus T(\Delta_{8^{j-1}r})} |\varepsilon(Y)||\nabla_Y G_t||\nabla u_t|dY\bigg)\lesssim \mathcal{C}(a)
\end{equation}
where $\Delta_r=\Delta_r(Q_0)$, $\Delta_R=\Delta_R(Q_0)$ and $L$ is chosen such that $8^Lr\leq R<8^{L+1}r$.}

\textit{Proof of Claim 3.} We will estimate
$$I_j=\int_{T(\Delta_{8^{j}r})\setminus T(\Delta_{8^{j-1}r})} |\varepsilon(Y)||\nabla_Y G_t||\nabla u_t|dY$$
for fixed $j$. We start by defining a dyadic decomposition on $A_j=T(\Delta_{8^{j}r})\setminus T(\Delta_{8^{j-1}r})$. Cover $\Delta_j'=\Delta_{8^{j}r}\setminus \Delta_{8^{j-1}r}$ by balls $B_i(Q_i,\rho)$ with center $Q_i\in\Delta'_j$ and radius $\rho=8^{j-6}r$. The numbers of the balls needed is roughly $c_n=8^{7n-7}-8^{4n-3}$. In that case we are able to cover small strips close to the boundary by balls. Then we split 
$$A_j=[(\bigcup^{c_n}_{i=1} B_i)\cap A_j]\cup[A_j\setminus\bigcup^{c_n}_{i=1} B_i]=V_j\cup W_j.$$
Following the pattern in the proof of Lemma 7.7 in \cite{MPT} we will estimate first the term close to the boundary
$$\int_{V_j} |\varepsilon(Y)||\nabla_Y G_t||\nabla u_t|dY\leq \lim_{\epsilon\rightarrow 0}\int_{V_j\setminus (\partial\Omega,\epsilon)} |\varepsilon(Y)||\nabla_Y G_t||\nabla u_t|dY=\lim_{\epsilon\rightarrow 0} I^\epsilon_j$$
and
\begin{eqnarray}\label{claim3-1}
I^\epsilon_j &\lesssim&  \sum_{\substack{Q_\alpha^k\in 3\Delta'_j\\ \text{diam}Q_\alpha^k\leq 8^jr}} \sup_{I^k_\alpha}|\varepsilon(Y)|\int_{I^k_\alpha}|\nabla G_t(0,Y)||\nabla u_t(Y)|dY\\ 
&\lesssim& \sum_{\substack{Q_\alpha^k\in 3\Delta'_j\\ \text{diam}Q_\alpha^k\leq 8^jr}}\bigg(\int_{I^k_\alpha}\frac{a^2(Y)G_t(0,Y)^2}{\delta(Y)^2}dY\bigg)^{1/2}\bigg(\int_{I^k_\alpha}|\nabla u_t|^2dY\bigg)^{1/2}.\nonumber
\end{eqnarray} 
Now for $Y\in I_\alpha^k$
\begin{equation}\label{rev1-1}
G_t(0,Y)\sim\frac{\omega_t(Q_\alpha^k)}{(\text{diam}Q_\alpha^k)^{n-2}}
\end{equation}
and 
\begin{equation}\label{rev1-2}
\int_{I_\alpha^k}|\nabla u|^2dY\lesssim (\text{diam}Q_\alpha^k)^{-2}\int_{2I_\alpha^k}u_t^2dY.
\end{equation}
On the other hand, for $Y\in 2I_\alpha^k$
\begin{equation}\label{rev1-3}
|u_t(Y)|\lesssim\bigg(\frac{\text{diam}Q_\alpha^k}{8^jr}\bigg)^{\eta}\sup_{A_j}|u_t|
\end{equation}
for some $\eta>0$. We will now estimate $\sup_{A_j}|u_t|$. In particular, for $Z\in A_j$ we have
\begin{eqnarray}\label{rev1-4}
|u_t(Z)| &\leq&  \frac{1}{\omega_t(\Delta_r)}\int_{\Delta_r}\bigg|f-\fint_{\Delta_r} fd\omega_t\bigg|d\omega_t^Z\\
&\leq& \frac{1}{\omega_t(\Delta_r)}\int_{\Delta_r}|f|d\omega_t^Z+\frac{\omega_t^Z(\Delta_r)}{\omega_t(\Delta_r)^2}\int_{\Delta_r}|f|d\omega_t\nonumber
\end{eqnarray}
and 
$$\int_{\Delta_r}|f|d\omega_t^Z=\int_{\Delta_r}K_t(Z,Q)|f|d\omega_t$$
where
$$K_t(Z,Q)\sim\frac{G_t(Z,A_r(Q_0))}{G_t(0,A_r(Q_0))}\sim\frac{\omega^Z_t(\Delta_r)}{\omega_t(\Delta_r)}.$$
Therefore (\ref{rev1-4}) becomes
\begin{eqnarray}\label{rev1-5}
|u_t(Z)| &\lesssim&  \frac{\omega_t^Z(\Delta_r)}{\omega_t(\Delta_r)^2}\int_{\Delta_r}|f|d\omega_t\\
&\lesssim& \sigma(\Delta_r)\frac{\omega_t^Z(\Delta_r)}{\omega_t(\Delta_r)^2}\bigg(\fint_{\Delta_r}|f|d\omega_t\bigg)^{1/2}\bigg(\fint_{\Delta_r}k_t^2d\sigma\bigg)^{1/2}\nonumber\\
&\lesssim& \frac{\omega_t^Z(\Delta_r)}{\omega_t(\Delta_r)}\lesssim\bigg(\frac{\delta(Z)}{8^jr}\bigg)^\eta\frac{\omega_t^{P_j}(\Delta_r)}{\omega_t(\Delta_r)}\nonumber
\end{eqnarray}
for some $\eta>0$ and $P_j\in W_j$. Now
$$\frac{\omega_t(\Delta_r)}{\omega_t(\Delta_{8^jr})}\sim \frac{\omega_t^{P_j}(\Delta_r)}{\omega_t^{P_j}(\Delta_{8^jr})}$$
thus from (\ref{rev1-5}), we obtain
$$|u_t(Z)| \lesssim \bigg(\frac{\delta(Z)}{8^jr}\bigg)^\eta\frac{1}{\omega_t(\Delta_{8^jr})}\lesssim \frac{1}{\omega_t(\Delta_{8^jr})}$$
and (\ref{rev1-3}) becomes
\begin{equation}\label{rev1-6}
|u_t(Y)|\lesssim \bigg(\frac{\text{diam}Q_\alpha^k}{8^jr}\bigg)^\eta\frac{1}{\omega_t(\Delta_{8^jr})}
\end{equation}
for $Y\in 2I_\alpha^k$, where for simplicity we used the same notation for the exponent $\eta>0$. 

We now return to the estimate of $I_j^\varepsilon$ in (\ref{claim3-1}) to obtain
\begin{eqnarray}\label{claim3-1A}
I_j^\epsilon &\lesssim& \sum_{\substack{Q_\alpha^k\in 3\Delta'_j\\ \text{diam}Q_\alpha^k\leq 8^jr}} \bigg(\int_{I^k_\alpha}\frac{a^2(Y)}{\delta(Y)}dY\bigg)^{1/2}\bigg(\frac{1}{\text{diam}Q_\alpha^k}\bigg)^{1/2+1}\frac{\omega_t(Q_\alpha^k)}{(\text{diam}Q_\alpha^k)^{n-2}}\nonumber\\
&\cdot&\bigg(\frac{\text{diam}Q_\alpha^k}{8^jr}\bigg)^\eta\frac{1}{\omega_t(\Delta_{8^jr})}(\text{diam}Q_\alpha^k)^{n/2}\nonumber\\
&\lesssim& \bigg(\frac{1}{8^jr}\bigg)^\eta\frac{1}{\omega_t(\Delta_{8^jr})}\sum_{\substack{Q_\alpha^k\in 3\Delta'_j\\ \text{diam}Q_\alpha^k\leq 8^jr}} \bigg(\frac{1}{\sigma(Q_\alpha^k)}\int_{I^k_\alpha}\frac{a^2(Y)}{\delta(Y)}dY\bigg)^{1/2}\omega_t(Q_\alpha^k)(\text{diam}Q_\alpha^k)^\eta\nonumber\\
&\lesssim& \bigg(\frac{1}{8^jr}\bigg)^\eta\frac{1}{\omega_t(\Delta_{8^jr})}\mathcal{C}(a )\sum_{\substack{Q_\alpha^k\in 3\Delta'_j\\ \text{diam}Q_\alpha^k\leq 8^jr}}\omega_t(Q_\alpha^k)(\text{diam}(Q_\alpha^k)^\eta\nonumber\\
%&\lesssim& \bigg(\frac{1}{8^jr}\bigg)^\eta\frac{1}{\omega_t(\Delta_{8^jr})}\mathcal{C}(a )\sum_{k\geq j+l_0}\sum_{Q_\alpha^k}(8^{-k}\lambda)^\eta\omega_t()
&\lesssim& 8^{-2j\eta}\mathcal{C}(a).
\end{eqnarray}

Finally to estimate the integral over $W_j$, we cover $W_j$ with balls $B_{jl}$ with centers $Q_{jl}\in W_j$ and radius $\rho_{j}=8^{j-11}r$. Following the pattern in the proof above we have that for $Y\in B_{jl}$
$$G_t(0,Y)\sim\frac{\omega_t(\Delta_{jl})}{(8^jr)^{n-2}}$$
and
$$\bigg(\int_{B_{jl}}|\nabla u_t|^2dY\bigg)^{1/2}\leq (8^jr)^{\frac{n-2}{2}}\bigg(\frac{r}{8^jr}\bigg)^\eta\frac{1}{\omega_t(\Delta_{jl})}.$$
Therefore,
\begin{eqnarray}\label{claim3-2}
\int_{W_j} |\varepsilon(Y)||\nabla_Y G_t||\nabla u_t|dY &\lesssim&  \sum_l\sup_{B_{jl}}|\varepsilon(Y)|\bigg(\int_{B_{jl}}\frac{G_t(0,Y)^2}{\delta(Y)^2}dY\bigg)^{1/2}\bigg(\int_{B_{jl}}|\nabla u_t|^2dY\bigg)^{1/2}\nonumber\\
&\lesssim& \sum_l\bigg(\int_{B_{jl}}\frac{a^2(Y)G_t(0,Y)^2}{\delta(Y)^2}dY\bigg)^{1/2}\bigg(\int_{B_{jl}}|\nabla u_t|^2dY\bigg)^{1/2}\nonumber\\
&\lesssim& \mathcal{C}(a)\sum_l\frac{1}{(8^jr)^{1/2}}\frac{\omega_t(\Delta_{jl})}{(8^jr)^{n-2}}(8^jr)^{\frac{n}{2}-1}8^{-j\eta}\frac{1}{\omega_t(\Delta_{jl})}(8^jr)^{\frac{n-1}{2}}\nonumber\\
&\lesssim& 8^{-\eta j}\mathcal{C}(a)
\end{eqnarray}
 for some $\eta>0$,  which concludes the proof of Claim 3.

To finish the proof of Lemma \ref{lemma5}, we write 
$$\Omega=T(\Delta_{Mr})\cup\bigg(\Omega\setminus T(\Delta_{r^\beta})\bigg)\cup \bigg(T(\Delta_{r^\beta})\setminus T(\Delta_{Mr})\bigg)$$
and combine Claims 1, 2 and 3.
\end{proof}

%%CHECK statement justify hypothesis

\begin{thm}\label{main}
Let $\Omega$ be a CAD, there exist $\eta_0>0$ and  $q_0>2$ such that if $\mathcal C(a)<\eta_0$ and $\omega_0\in B_{q_0}(\sigma)$ then  
\begin{equation}\label{mainres}
\bigg(\fint_{\Delta_r}k_1^2d\sigma\bigg)^{1/2}\leq \bigg[\frac{(\fint_{\Delta_r}k_0^2d\sigma)^{1/2}}{\fint_{\Delta_r}k_0d\sigma}+Cr^\gamma+C\sup_{\substack{Q\in\partial \Omega\\ s\leq r^\beta}}\bigg(\frac{1}{\sigma(\Delta_s(Q))}\int_{T(\Delta_s(Q))}\frac{\a(Y)^2}{\delta(Y)}dY\bigg)^{1/2}\bigg]\fint_{\Delta_r}k_1d\sigma
\end{equation}   
where the positive constants $\beta$, $\gamma$ and $C$ only depend on the CAD constants, the ellipticity constants and the dimension.

\end{thm}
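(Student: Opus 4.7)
The plan is to deduce Theorem \ref{main} from Lemmas \ref{lemma4} and \ref{lemma5} by an $L^2(\sigma)$ duality argument applied to the interpolating family $\{\omega_t\}_{t\in[0,1]}$. First I would fix the thresholds $\eta_0$ and $q_0$ so that the hypotheses of Remark \ref{esch} are in force for every $t\in[0,1]$: Proposition \ref{p-fkp} converts the smallness of $\mathcal{C}(a)$ into the condition (\ref{condThm2.11}) required by Theorem \ref{mainthm1MPT}, and Corollary \ref{surface} then produces $\omega_t\in B_{2(1+\delta_0)}(\sigma)\subset B_2(\sigma)$ with a uniform constant. With this uniform regularity Lemmas \ref{lemma4} and \ref{lemma5} apply to every admissible test function simultaneously for all $t$.

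Next I would exploit the $L^2$--$L^2$ duality
\begin{equation*}
\bigg(\fint_{\Delta_r} k_1^2\, d\sigma\bigg)^{1/2}
=\sup_{f}\fint_{\Delta_r} f k_1\, d\sigma,
\end{equation*}
taken over non-negative $f$ supported in $\Delta_r$ with $\|f\|_{L^2(d\sigma/\sigma(\Delta_r))}\le 1$. The supremum is attained by the extremizer $f^{*}=(\fint_{\Delta_r} k_1^2\, d\sigma)^{-1/2}\, k_1\,\chi_{\Delta_r}$, which by construction satisfies the hypotheses of Lemma \ref{lemma5}. A direct computation gives
\begin{equation*}
\Psi(1)=\frac{1}{\omega_1(\Delta_r)}\int_{\Delta_r} f^{*} k_1\, d\sigma
=\frac{(\fint_{\Delta_r} k_1^2\, d\sigma)^{1/2}}{\fint_{\Delta_r} k_1\, d\sigma},
\end{equation*}
while Cauchy--Schwarz applied to $\int_{\Delta_r} f^{*} k_0\, d\sigma$ together with $\fint_{\Delta_r}(f^{*})^2 d\sigma=1$ yields
\begin{equation*}
\Psi(0)\le\frac{(\fint_{\Delta_r} k_0^2\, d\sigma)^{1/2}}{\fint_{\Delta_r} k_0\, d\sigma}.
\end{equation*}

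Since $\Psi$ is Lipschitz on $[0,1]$ by Lemma \ref{lemma4}, the fundamental theorem of calculus combined with the pointwise bound on $\dot{\Psi}$ from Lemma \ref{lemma5} gives
\begin{equation*}
\Psi(1)-\Psi(0)\le\int_0^1|\dot{\Psi}(t)|\, dt
\le C\bigg[r^{\gamma}+\sup_{\substack{Q\in\partial\Omega\\ s\le r^{\beta}}}\bigg(\frac{1}{\sigma(\Delta_s(Q))}\int_{T(\Delta_s(Q))}\frac{a(Y)^2}{\delta(Y)}\, dY\bigg)^{1/2}\bigg].
\end{equation*}
Substituting the explicit forms of $\Psi(1)$ and the upper bound for $\Psi(0)$, and then multiplying through by $\fint_{\Delta_r} k_1\, d\sigma$, produces (\ref{mainres}) exactly.

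The substantive analytic content has already been absorbed into Lemma \ref{lemma5}, so the remaining obstacle is really a bookkeeping check: one has to verify that the extremizer $f^{*}$---which is manufactured from $k_1$, hence from $t=1$---still lies in the admissible class for \emph{every} $t\in[0,1]$ and that the constants appearing in Lemma \ref{lemma5} depend only on the CAD and ellipticity data rather than on the particular $L_t$. The first point is immediate from the construction of $f^{*}$; the second is exactly what the uniform $B_2(\sigma)$ control from Remark \ref{esch} buys, once $\eta_0$ and $q_0$ have been fixed appropriately at the outset. With those uniformities in place, the duality-plus-differential-inequality scheme above closes off the proof.
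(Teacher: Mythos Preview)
Your argument is correct and matches the paper's proof in all essentials: both combine the Lipschitz regularity of $\Psi$ from Lemma \ref{lemma4} with the derivative bound of Lemma \ref{lemma5} via the fundamental theorem of calculus, bound $\Psi(0)$ by Cauchy--Schwarz, and then appeal to $L^2$ duality. The only cosmetic difference is that you fix the extremizer $f^{*}$ at the outset while the paper carries a generic admissible $f$ through and takes the supremum at the end; the computations are identical.
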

\begin{proof} By (\ref{estdotpsi}) and the fundamental theorem of calculus we have
$$\Psi(1)\leq \Psi(0)+e(r)$$
where
$$e(r)=C\bigg(r^\gamma+\sup_{\substack{Q\in\partial \Omega\\ s\leq r^\beta}}\bigg(\frac{1}{\sigma(\Delta_s(Q))}\int_{T(\Delta_s(Q))}\frac{\a(Y)^2}{\delta(Y)}dY\bigg)^{1/2}\bigg)$$
and $$\Psi(s)=\frac{1}{\omega_s(\Delta_r)}\int_{\Delta_r}fk_sd\sigma$$ for $f\geq 0$, $\text{spt}f\subset \Delta_r$ and $||f||_{L^2(d\sigma/\Delta_r)}\leq 1$. By the Cauchy-Schwartz inequality we obtain
\begin{eqnarray}\label{fund}
\fint_{\Delta_r}fk_1d\sigma&\leq& \bigg[\frac{1}{\omega_0(\Delta_r)}\int_{\Delta_r}fk_0d\sigma+e(r)\bigg]\fint_{\Delta_r}k_1d\sigma\nonumber\\
&\leq& \bigg[\frac{1}{\omega_0(\Delta_r)}\bigg(\fint_{\Delta_r}f^2d\sigma\bigg)^{1/2}\bigg(\int_{\Delta_r}k_0^2d\sigma
\bigg)^{1/2}+e(r)\bigg]\fint_{\Delta_r}k_1d\sigma\nonumber\\
&\leq& \bigg[\frac{\bigg(\fint_{\Delta_r}k_0^2d\sigma\bigg)^{1/2}}{\fint_{\Delta_r}k_0d\sigma}+e(r)\bigg]\fint_{\Delta_r}k_1d\sigma.\nonumber
\end{eqnarray}
By duality we have
$$\bigg(\fint_{\Delta_r}k_1^2d\sigma\bigg)^{1/2} \leq\bigg[\frac{\bigg(\fint_{\Delta_r}k_0^2d\sigma\bigg)^{1/2}}{\fint_{\Delta_r}k_0d\sigma}+e(r)\bigg]\fint_{\Delta_r}k_1d\sigma$$
and the proof is complete.
\end{proof}

The following result is an easy corollary of Theorem \ref{main}
\begin{cor}\label{cor1}
Let $\Omega$ be a CAD.
Assume that $\log k_0\in VMO(\sigma)$ and that $L_1$ is a perturbation of $L_0$ whose deviation from $L_0$ has vanishing Carleson constant, then given $\varepsilon>0$ there exists $r_0>0$ such that for every $r\leq r_0$
$$\bigg(\fint_{\Delta_r}k_1^2d\sigma\bigg)^{1/2}\leq (1+\varepsilon)\fint_{\Delta_r}k_1d\sigma.$$
%In particular,  ${\rm{log}}k_1\in VMO(\partial\Omega)$.
\end{cor}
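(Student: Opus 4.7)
The plan is to apply Theorem \ref{main} directly and then show that the bracketed factor on the right hand side of (\ref{mainres}) tends to $1$ as $r \to 0$: the first term tends to $1$, and the other two tend to $0$. Before invoking the theorem, its hypotheses $\mathcal{C}(a) < \eta_0$ and $\omega_0 \in B_{q_0}(\sigma)$ must be arranged. The Carleson smallness is not literally given by the vanishing-Carleson hypothesis, but is produced by the localization of Lemmas \ref{claim1-p4}--\ref{claim2-p4}: replace $L_1$ by the operator $L'$ whose deviation from $L_0$ is supported in a $4R_0$-neighborhood of $\partial\Omega$ and choose $R_0$ so small that, using (\ref{carlesonVanish}) with $K = \overline{\Omega}$ on tents over small surface balls, and the uniform bound $|A_1 - A_0| \leq 2\Lambda$ combined with the thin-strip estimate $|\{\delta < 4R_0\} \cap T(\Delta)| \lesssim R_0\,\sigma(\Delta)$ on tents over large surface balls, one has $\mathcal{C}(a') < \eta_0$; the replacement preserves all $B_p(\omega_0)$-information of $\omega_1$ by Lemma \ref{claim2-p4}. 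The second hypothesis $\omega_0 \in B_{q_0}(\sigma)$ is extracted from $\log k_0 \in \mathrm{VMO}(\sigma)$: VMO makes the local $\mathrm{BMO}$ seminorm of $\log k_0$ arbitrarily small on sufficiently small surface balls, so John--Nirenberg yields a reverse H\"older inequality of any prescribed exponent $q_0$ at small scales, while the inclusion $\mathrm{BMO} \subset A_\infty$ and the boundedness of $\Omega$ handle the remaining bounded range of scales.

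With the hypotheses in place, (\ref{mainres}) applies. The polynomial term $Cr^\gamma$ is manifestly $o(1)$, and the Carleson supremum is dominated by $\gamma_{\overline{\Omega}}(r^\beta)$, which tends to $0$ by the vanishing Carleson condition. Thus it remains to establish
\begin{equation*}
\lim_{r \to 0}\frac{\bigl(\fint_{\Delta_r} k_0^2\,d\sigma\bigr)^{1/2}}{\fint_{\Delta_r} k_0\,d\sigma} = 1.
\end{equation*}
Writing $g = \log k_0$ and $g_r = \fint_{\Delta_r} g\,d\sigma$, this ratio equals $\bigl(\fint_{\Delta_r} e^{2(g - g_r)}\,d\sigma\bigr)^{1/2}\big/ \fint_{\Delta_r} e^{g - g_r}\,d\sigma$. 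Each integral is at least $1$ by Jensen's inequality, and by the John--Nirenberg inequality applied to $g$ on balls where its local $\mathrm{BMO}$ norm is small, each integral converges to $1$ as $r \to 0$ --- precisely the content of $\log k_0 \in \mathrm{VMO}(\sigma)$.

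Combining these three observations, given $\varepsilon > 0$ I choose $r_0$ so small that for $r \leq r_0$ each of the three pieces of the bracket in (\ref{mainres}) differs from its limit by at most a small multiple of $\varepsilon$, yielding $(\fint_{\Delta_r} k_1^2\,d\sigma)^{1/2} \leq (1 + \varepsilon)\fint_{\Delta_r} k_1\,d\sigma$ for all $r \leq r_0$. The main technical hurdle is the John--Nirenberg step translating $\log k_0 \in \mathrm{VMO}(\sigma)$ into the near-equality of the $L^2$ and $L^1$ averages of $k_0$; every other ingredient is a routine consequence of what was already established in the paper.
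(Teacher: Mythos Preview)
Your overall approach matches the paper's, which offers no proof beyond calling this an ``easy corollary'' of Theorem~\ref{main}; you correctly identify that one must verify the hypotheses of that theorem and then show each term in the bracket of (\ref{mainres}) tends to its claimed limit, and your treatment of the three terms is sound. However, the thin-strip argument you propose for controlling $\mathcal C(a')$ on large surface balls does not work: the crude bound $|A_1-A_0|\le 2\Lambda$ together with the volume estimate $|\{\delta<4R_0\}\cap T(\Delta)|\lesssim R_0\,\sigma(\Delta)$ cannot control $\int_{T(\Delta)\cap\{\delta<4R_0\}}a'^2\delta^{-1}\,dX$, because the factor $\delta^{-1}$ is not integrable over a boundary strip. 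The correct fix is to cover $T(\Delta)\cap\{\delta<4R_0\}$ by Carleson tents $T(\Delta_i)$ of radius comparable to $R_0$ and apply the small-scale bound $\gamma_{\overline\Omega}(CR_0)$ on each, then sum using Ahlfors regularity to obtain $\lesssim\gamma_{\overline\Omega}(CR_0)^2\,\sigma(\Delta)$.

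There is a second, subtler gap: after localization you obtain the near-$B_2(\sigma)$ estimate for $k'=d\omega'/d\sigma$, but Lemma~\ref{claim2-p4} only transfers $B_p(\omega_0)$-\emph{membership}, not the asymptotic value of the reverse-H\"older constant with respect to $\sigma$. To pass back to $k_1$ you need the further observation that, since $A'=A_1$ in a full neighborhood of $\partial\Omega$, the ratio $G_1(0,\cdot)/G'(0,\cdot)$ is H\"older continuous up to $\partial\Omega$ by Lemma~\ref{lem2.6}, hence $k_1/k'$ is H\"older on $\partial\Omega$ and the $(1+\varepsilon)$-estimate transfers. Alternatively, note that the paper runs all of Section~3 under the standing hypothesis of Remark~\ref{esch} that $\mathcal C(a)$ is already small enough; read in that context the localization step is unnecessary and the rest of your argument goes through directly.
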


In \cite{kt1} the authors proved that the logarithm of the Poisson kernel on a chord arc domain with vanishing constant belongs to $\rm{VMO}(\sigma)$. Thus Corollary \ref{cor1} yields:
\begin{cor}\label{cor2}
If $\Omega$ is a chord arc domain with vanishing constant and $L_1$ is a perturbation of the Laplacian whose deviation from the Laplacian has vanishing Carleson constant then $\log k_1\in VMO(\partial \Omega)$.
\end{cor}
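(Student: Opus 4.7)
The plan is to combine Corollary \ref{cor1} with the Kenig--Toro VMO theorem for the Laplacian on CADs with vanishing constant (from \cite{kt1}), and then upgrade the resulting reverse Hölder inequality with constant approaching $1$ to the VMO condition for $\log k_1$.

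First, since $\Omega$ is a chord arc domain with vanishing constant, Kenig--Toro \cite{kt1} yields $\log k_0 \in \text{VMO}(\sigma)$, where $k_0$ is the Poisson kernel of the Laplacian $L_0=\Delta$. In particular the John--Nirenberg inequality forces $\omega_0 \in B_q(\sigma)$ for every $q<\infty$, so the hypothesis $\omega_0 \in B_{q_0}(\sigma)$ of Theorem \ref{main} is met. Because the deviation of $L_1$ from $\Delta$ has vanishing Carleson constant, $\mathcal{C}(a)$ can be made as small as desired on fine scales, and Corollary \ref{cor1} applies: for each $\varepsilon>0$ there exists $r_0>0$ such that
$$\left(\fint_{\Delta_r} k_1^2\,d\sigma\right)^{1/2} \leq (1+\varepsilon)\fint_{\Delta_r} k_1\,d\sigma$$
for every surface ball $\Delta_r$ with $r\leq r_0$.

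Next I would deduce $\log k_1 \in \text{VMO}(\sigma)$ from this reverse Hölder inequality with constant tending to $1$. Setting $\bar k=\fint_{\Delta_r}k_1\,d\sigma$ and $g=k_1/\bar k$, the inequality is equivalent to $\fint_{\Delta_r}(g-1)^2\,d\sigma \lesssim \varepsilon$. Split $\Delta_r$ into the good set $G=\{|g-1|\leq 1/2\}$ and its complement $B$. On $G$ the expansion $|\log g|\lesssim |g-1|$ together with Cauchy--Schwarz gives $\fint_G |\log g|\,d\sigma \lesssim \sqrt{\varepsilon}$. On $B$, Chebyshev yields $\sigma(B)/\sigma(\Delta_r)\lesssim \varepsilon$; to dominate the potentially large contribution of $|\log g|$ there, I would invoke the fact that $\omega_1\in A_\infty(\sigma)$, which follows from Corollary \ref{surface}. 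This provides the upper bound $\fint_{\Delta_r}g^p\,d\sigma \leq C$ for some $p>2$ (via Gehring self-improvement, Lemma \ref{gehring}) and the lower bound $\fint_{\Delta_r}g^{-s}\,d\sigma\leq C$ for some $s>0$ (the reverse direction of $A_\infty$). Combining these with $|\log g|\lesssim g^{\eta}+g^{-\eta}$ for small $\eta$ and Hölder's inequality gives $\fint_B |\log g|\,d\sigma \lesssim \varepsilon^{\theta}$ for some $\theta>0$.

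Since $\log\bar k$ is constant on $\Delta_r$, these two pieces together show $\fint_{\Delta_r}|\log k_1 - \log\bar k|\,d\sigma\to 0$ uniformly in $Q\in\partial\Omega$ as $r\to 0$, which is the VMO condition for $\log k_1$. The principal obstacle is the bad-set estimate: neither the reverse Hölder inequality alone nor the $A_\infty$ property alone would suffice, and one must marshal both upper and lower integrability of $g$ simultaneously. Corollary \ref{surface} is precisely the tool that places $\omega_1$ inside $B_{2(1+\delta_0)}(\sigma)$ and thus makes both ingredients available; the remaining manipulations are routine.
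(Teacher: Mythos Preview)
Your proposal is correct and follows the paper's route: invoke \cite{kt1} to get $\log k_0\in\mathrm{VMO}(\sigma)$, then apply Corollary~\ref{cor1}. The paper simply asserts that Corollary~\ref{cor1} yields Corollary~\ref{cor2}, treating the implication ``reverse H\"older with constant tending to $1$ $\Rightarrow$ $\log k_1\in\mathrm{VMO}$'' as a classical fact about $A_\infty$ weights; your good-set/bad-set argument spells out this standard step, and the ingredients you call on (Gehring self-improvement for upper integrability, the $A_p$ side of $A_\infty$ for negative-power integrability) are exactly what is needed.
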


\bigskip
\noindent {\bf {Acknowledgments:}} J. Pipher was partially supported by NSF DMS grant 0901139. T. Toro was partially supported by NSF DMS grant 0856687. E. Milakis was partially supported 
by Marie Curie International Reintegration Grant No 256481 within the 7th European Community Framework Programme. 
Part of this work was carried out while the first author was visiting the University of Washington. He wishes to thank the Department of Mathematics for the warm hospitality and support.

\begin{tabular}{l}
Emmanouil Milakis\\ University of Cyprus \\ Department of Mathematics \& Statistics \\ P.O. Box 20537\\
Nicosia, CY- 1678 CYPRUS
\\ {\small \tt emilakis@ucy.ac.cy}
\end{tabular}
\begin{tabular}{lr}
Jill Pipher\\ Brown University \\ Mathematics Department\\ Box 1917 \\
Providence, RI 02912  USA
\\ {\small \tt jpipher@math.brown.edu}
\hfill
\end{tabular}
\begin{tabular}{lr}
Tatiana Toro \\ University of Washington \\ Department of Mathematics \\ Box 354350 \\
Seattle, WA 98195-4350 USA
\\ {\small \tt toro@math.washington.edu}
\end{tabular}


\begin{thebibliography}{99999}

\bibitem{cfk} L. Caffarelli, E. Fabes \& C. Kenig,\textit{Completely singular elliptic-harmonic measures}, Ind. U. Math. J. \ {\bf 30} (1981), 917-924.

\bibitem{c}M. Christ, \textit{A $T(b)$ theorem with remarks on analytic capacity and the Cauchy integral.}  Colloq. Math.  {\bf 60/61}  (1990),  no. 2, 601--628.

\bibitem{d1}B. Dahlberg, \textit{Estimates of harmonic measure.} Arch. Rational Mech. Anal.  {\bf 65}  (1977), no. 3, 275--288.

\bibitem{d4}B. Dahlberg, \textit{On the absolute continuity of elliptic measure.} American Journal of Mathematics {\bf 108} (1986), 1119-1138.

\bibitem{esc1}L. Escauriaza, \textit{The $L\sp p$ Dirichlet problem for small perturbations of the Laplacian.}
Israel J. Math.  {\bf 94}  (1996), 353--366.

\bibitem{eg}L. Evans \& R. Gariepy, \textit{Measure theory and fine properties of functions.} Studies in Advanced Mathematics, 1992. 

\bibitem{fkp} R. Fefferman, C. Kenig \& J. Pipher, \textit{The theory of weights and the Dirichlet problem for elliptic equations.} Ann. of Math.  {\bf 134} (1991), no. 1, 65--124.

\bibitem{G} F. Gehring, \textit{The $L^p$-integrability of the partial derivatives of a quasiconformal mapping.}
Acta Math. {\bf 130} (1973), 265--277.

\bibitem{Gr} L. Grafakos, \textit{Modern Fourier Analysis.}  Graduate Texts in Mathematics, Vol. 250, Springer
2nd ed. 2008. XV+522 pp

\bibitem{jk}D. Jerison \& C. Kenig, \textit{Boundary Behavior of Harmonic Functions in Non-tangentially Accessible Domains}  Adv. Math. {\bf 46} (1982), 80--147.

\bibitem{jk82}D. Jerison \& C. Kenig, \textit{The logarithm of the Poisson kernel of a $C\sp{1}$ domain has vanishing mean oscillation.}  Trans. Amer. Math. Soc.  {\bf 273} (1982), no. 2, 781--794.

\bibitem{k1} C. Kenig, \textit{Harmonic analysis techniques for second order elliptic boundary value problems.}
CBMS Regional Conference Series in Mathematics, 83. AMS
Providence, RI, 1994. xii+146 pp.

\bibitem{kt1} C. Kenig \& T. Toro, \textit{Harmonic measure on locally flat domains.}
Duke Math. J.  {\bf 87}  (1997),  no. 3, 509--551.

\bibitem{MPT} E. Milakis, J. Pipher \& T. Toro \textit{Harmonic analysis on chord arc domains,}
to appear in Journal of Geometric Analysis.

\bibitem{mt1} E. Milakis \& T. Toro \textit{Divergence form operators in Reifenberg flat domains.}
Math. Z. {\bf 264} (2010), no 1, 15-41.

\bibitem{mm}L. Modica \& S. Mortola, \textit{Construction of a singular elliptic-harmonic measure}, Manuscripta Math. {\bf 33} (1980), 81-98.

%\bibitem{cfms} L. Caffarelli, E. Fabes, S. Mortola, S. Salsa, \textit{Boundary behavior of nonnegative solutions of elliptic operators in divergence form.}
%Indiana Univ. Math. J. 30 (1981), no. 4, 621--640.
%\bibitem{ck} L. Caffarelli, C. Kenig, \textit{Gradient estimates for variable coefficient parabolic equations and singular perturbation problems.}  Amer. J. Math.  120  (1998),  no. 2, 391--439.
%\bibitem{CF} R. Coifman, C. Fefferman, \textit{Weighted norm inequalities for maximal functions and singular integrals.}
%Studia Math. 51 (1974), 241--250.

%\bibitem{djk}B. Dahlberg, D. Jerison, C. Kenig, \textit{Area integral estimates for elliptic differential operators with nonsmooth coefficients.} Ark. Mat.  22  (1984),  no. 1, 97--108.

%\bibitem{feff} R. Fefferman, \textit{A criterion for the absolute continuity of the harmonic measure associated with an elliptic operator.} J. Amer. Math. Soc.  2  (1989),  no. 1, 127--135.
%\bibitem{fkp} R. Fefferman, C. Kenig, J. Pipher, \textit{The theory of weights and the Dirichlet problem for elliptic equations.} Ann. of Math. (2)  134 (1991), no. 1, 65--124.
%\bibitem{gt} D. Gilbarg, N. Trudinger, \textit{Elliptic partial differential equations of second order.} Reprint of the 1998 edition. Classics in Mathematics. Springer-Verlag, Berlin, 2001. xiv+517 pp.
%\bibitem{hs} R. Hardt, L. Simon, \textit{Boundary regularity and embedded solutions for the oriented Plateau problem.}
%Ann. of Math. (2) 110 (1979), no. 3, 439--486.

%\bibitem{KP} C. Kenig, J. Pipher, \textit{The Dirichlet problem for elliptic equations with drift terms.}  Publ. Mat.  45  (2001),  no. 1, 199--217.

%\bibitem{ko} M. Korey, \textit{Ideal Weights: Doubling and Absolute Continuity
%with Asymptotically Optimal Bounds,} Ph.D Dissertation, Univ. of
%Chicago, (1995).
%\bibitem{s}D. Sarason, \textit{Functions of vanishing mean oscillation.} Tran. Amer. Math. Soc. 207 (1975), 391-405


\end{thebibliography}
\end{document}